\documentclass[11pt, reqno]{amsart}
\usepackage{amsmath,amssymb,amsfonts,amscd,hyperref,color}
\usepackage[utf8]{inputenc}
\usepackage{csquotes}
\usepackage{nicefrac}

\usepackage[abs]{overpic}
\usepackage{verbatim}
\usepackage{enumerate}

\usepackage[
backend=biber,
style=alphabetic,
sorting=nyt,
maxnames = 99,
minnames = 99,
maxalphanames=99,
maxcitenames=99,
maxbibnames = 99,
doi=false,
url=false,
isbn= false
]{biblatex}

\AtEveryBibitem{%
	\clearfield{issn}%
}

\newcommand{\Hmm}[1]{\leavevmode{\marginpar{\tiny%
$\hbox to 0mm{\hspace*{-0.5mm}$\leftarrow$\hss}%
\vcenter{\vrule depth 0.1mm height 0.1mm width \the\marginparwidth}%
\hbox to
0mm{\hss$\rightarrow$\hspace*{-0.5mm}}$\\\relax\raggedright #1}}}

\newtheorem{theorem}{Theorem}
\newtheorem{corollary}[theorem]{Corollary}
\newtheorem{lemma}[theorem]{Lemma}
\newtheorem{proposition}[theorem]{Proposition}

\theoremstyle{definition}

\newtheorem*{remark}{Remark}

\numberwithin{equation}{section}
\newcommand{\Z}{{\mathbb Z}}
\newcommand{\R}{{\mathbb R}}

\newcommand{\N}{{\mathbb N}}

\newcommand{\D}{{\mathbb D}}

\let\L\undefined

\newcommand{\L}{\mathcal{L}}

\newcommand{\F}{\mathcal{F}}
\renewcommand{\P}{\mathcal{P}}

\newcommand{\abs}[1]{\left| #1 \right|}
\newcommand{\norm}[1]{\left\| #1 \right\|}

\renewcommand{\L}{\mathcal{L}}
\renewcommand{\D}{\mathcal{D}}

\begin{document}
\title[Supersolution Construction and Optimal Hardy Inequality]
{Supersolution Construction and Optimal Hardy Inequality for Fractional Laplacians}

\author[P.~Hake]{Philipp Hake}
\address{P.~Hake, Institut f\"ur Mathematik, Universit\"at Leipzig, 04109 Leipzig, Germany}\email{philipp.hake@math.uni-leipzig.de}
\author[M.~Keller]{Matthias Keller}
\address{M.~Keller, Israel Institute of Advanced Studies, Jerusalem, Israel; Institut f\"ur Mathematik, Universit\"at Potsdam
14476  Potsdam, Germany}
\email{matthias.keller@uni-potsdam.de}
\author[F.~Pogorzelski]{Felix Pogorzelski}
\address{ F.~Pogorzelski, Israel Institute of Advanced Studies, Jerusalem, Israel;  Institut f\"ur Mathematik, Universit\"at Leipzig, 04109 Leipzig, Germany}
\email{felix.pogorzelski@math.uni-leipzig.de}
 
\date{\today}

\begin{abstract}
We give a new criterion to show optimality of Hardy weights for general operators on graphs via the supersolution construction. 
For Laplacians on graphs without killing terms this always gives rise to an optimal Hardy weight via the Green's function without any further assumptions. Furthermore, in contrast to earlier results, our result is not restricted to locally finite graphs. This allows us in particular to obtain optimal Hardy weights for the fractional Laplacian on general graphs. For the fractional Laplacian on the Euclidean lattice, we then obtain an optimal Hardy weight with the constant and asymptotics as it is expected from the continuous setting. 
\end{abstract}
\maketitle

\tableofcontents

\section{Introduction}
The Hardy inequality  goes back to G.H.~Hardy's  desire for a simple proof of Hilbert's double series theorem, \cite{HardyHistory} and has since then become a classical result in analysis. Its continuous version is a fundamental tool in the analysis of partial differential equations, mathematical physics and spectral theory as it serves for example as a quantitative version of the uncertainty principle. 
The most prominent mathematical features of the Hardy inequality are that it allows to determine the optimal constant and to show the absence of a minimizer 
in many specific cases.

To move beyond these specific cases, a major breakthrough was the discovery of the supersolution construction in \cite{DFP14} answering a question of Agmon \cite{Agmon}. This method allows for the construction of optimal Hardy weights from positive superharmonic functions for general second-order differential operators. Here the notion of optimality consists of three properties that go beyond  optimality of the  constant. The first property is that the Hardy weight is critical, which means that it cannot be increased. Secondly, the Hardy weight is null-critical, which means that the Hardy inequality does not admit a minimizer. Thirdly, the Hardy weight is optimal near infinity, which means that outside of finite sets, it cannot be increased by a factor greater than one. This in particular implies the optimality of the constant, given the asymptotic behavior of the Hardy weight at infinity is known. Later it was shown that the third property actually follows  from the first two properties, see also \cite{KovarikPinchover,KPP18,Fischer,KN,HKPII}.

Going back to the roots of Hardy's original inequality in the discrete setting, the supersolution construction was transferred to  Schrödinger operators on graphs in \cite{KPP18}. Surprisingly this led to an improvement of Hardy's original inequality as it yields positive higher order terms in the Hardy weight \cite{KPPHardy} and also allowed to recover the expected constant $(d-2)^2/4$ at infinity in $\Z^d$, $d\ge 3$ which was not known before, cf.~\cite{RS09,KL16} and \cite{Gup23} as well. Furthermore, this method was used for trees \cite{BSV}, more general elliptic operators on $\Z^d$ \cite{KL23} and also for the quasilinear $p$-Laplacian \cite{Fischer,FKP23}.

However, the supersolution construction introduced in \cite{KPP20} has a substantial shortcoming as its assumptions (a bounded oscillation condition and properness of the harmonic function) exclude non-locally finite graphs and, in particular,  fractional Laplacians. The first discrete Hardy inequality of the fractional Laplacian on $\Z^d$ was already obtained in \cite{CR18} but even  optimality  of the constant was unknown. A bit later, optimality for this weight was  shown in \cite{DFF,KN}  for dimension one. Recently, this result was extended to $\Z^d$ in \cite{HKP} and more general graphs satisfying certain heat kernel bounds in \cite{HKPII}.
These results use a very different  method of proof and need substantial assumptions on the underlying graph.

In this paper we give a new criterion for obtaining optimal Hardy weights via the supersolution construction, Theorem~\ref{thm:null_critical_weights_Hardy}.  This criterion in particular does not entail any restriction such as local finiteness.
The core idea is to  use a positive potential for the supersolution construction. Since potentials can be approximated well enough by finitely supported functions, this approximation then morally plays the role of the null-sequence in the proof of criticality, cf.\@ \cite[Theorem~5.3]{KPP18}. 

Hence,  the Green's function is a natural candidate and always gives a critical Hardy weight, see Theorem~\ref{thm:optGF}. Moreover, it is null-critical in the case when there is no potential term. In the  presence of a positive potential term, our criterion for null-criticality can be characterized in terms of stochastic completeness properties of the graph, see \cite{HKPIV} and  the remark following Theorem~\ref{thm:optGF}.

The latter result is then applied to the fractional Laplacian, where the Green's function is given by the Riesz kernel, to obtain optimal Hardy weights for  general graphs, see Theorem~\ref{thm:FrLa}. We finally compare the optimal Hardy weight obtained via the supersolution construction with the optimal Hardy weight obtained in \cite{HKPII}
on $\Z^d$. It turns out that both weights share the same constant and  asymptotics at infinity.

The abstract main result, Theorem~\ref{thm:null_critical_weights_Hardy}, is stated for graphs with positive or vanishing potentials. Later, we show in Theorem~\ref{thm:main_optimality_theorem_Schr} how this can be easily extended to positive Schr\"odinger operators using the ground state transform.

\medskip

{\bf Organization of the article}. In the next section we give the basic definitions and state the main results which were discussed above. In Section~\ref{sec:prelim} we prove the main technical lemma which is essential to prove a result for so called Hardy-type inequalities, i.e., inequalities where the weights are not necessarily non-negative in Section~\ref{sec:null_critical_weights}. This result is then used in the proofs of Theorem~\ref{thm:null_critical_weights_Hardy} and Theorem~\ref{thm:optGF}. 
 Afterwards, in Section~\ref{sec:fractional} we discuss the fractional Laplacian and prove Theorems~\ref{thm:FrLa} and~\ref{thm:Zd}. Finally, in Section~\ref{sec:Schr} we extend  Theorem~\ref{thm:null_critical_weights_Hardy} to positive Schr\"odinger operators.\medskip

We emphasize that the abstract main results, Theorem~\ref{thm:null_critical_weights_Hardy} and Theorem~\ref{thm:main_optimality_theorem} are extensions of  \cite[Theorems~2.23 and~2.24]{hake2025optimal} from the first named author's doctoral dissertation thesis to graphs with a killing term over an arbitrary discrete measure space.

\section{Main results} 

Let $X$ be countable, discrete set. A {\em graph} $(b,c)$ with vertices in $X$ is a pair consisting of a symmetric function $b:X \times X \to [0, \infty)$ with $b(x,x) = 0$ and $\deg(x) := \sum_{y \in X} b(x,y) < \infty$ for all $x \in X$, and a non-negative map $c:X \to [0,\infty)$. The map $c$ is called the {\em killing term} of the graph. If $c=0$, then we simply write $b$ for the graph $(b,0)$. Endowing $X$ with a function $m:X \to (0,\infty)$ canonically gives rise to a measure of full support, and thus, to a discrete measure space $(X,m)$, and we say that $(b,c)$ is a {\em graph over $(X,m)$}. Furthermore, we associate the canonical $\ell^p(X,m)$ spaces with norms ${\|\cdot\|}_p$ to $(X,m)$ for $p \in [1,\infty]$ in the usual way.
See \cite{KLW} for an in-depth discussion of the theory of graphs over discrete measure spaces.

We will assume throughout this work that all graphs $(b,c)$ considered are {\em connected}, i.e.,\@ for every  $x,y \in X$ there are $x= y_1,y_2, \dots, y_n, y_{n+1}=y$ such that $b(y_i, y_{i+1})>0$ for all $1 \leq i \leq n$. A graph $(b,c)$ over $(X,m)$ gives rise to the {\em formal Laplacian}
 $\L:\mathcal{F} \to C(X)$ defined on the space of functions $$\mathcal{F}=\{f:X\to \R\mid \sum_{y\in X} b(x,y)|f(y)|<\infty\mbox{ for all }x\in X\},$$
where we denote by $C(X)$ the vector space of real-valued functions on $X$.
The formal Laplacian acts as
\[\L f(x)=\frac{1}{m(x)}\sum_{y\in X}b(x,y)(f(x)-f(y)) \,+ \, \frac{c(x)}{m(x)} f(x).\] 
The associated quadratic form $\mathcal{Q}$ is defined on the space of {\em functions of finite energy} \[\mathcal{D}=\{f:X\to \R \mid \mathcal{Q}(f)<\infty\},\]
and acts for $f \in \mathcal{D}$ as 
\[\mathcal{Q}(f)=\frac{1}{2}\sum_{x,y\in X}b(x,y)(f(x)-f(y))^2+\sum_{x\in X}c(x)f(x)^2.\] 
We denote by $C_c(X) \subseteq C(X)$  the subspace of all finitely supported functions and by $C_0(X) $ its closure with respect to the uniform norm ${\|\cdot\|}_\infty$. 
The {\em extended space} $$\mathcal{D}_0:=\mathcal{D}_0(\mathcal{Q}):=\overline{C_c(X)}^{\|\cdot\|_{o}}$$ is defined as the closure of $C_c(X)$ in $\mathcal{D}$ with respect to the norm $\|f\|_{o}=\sqrt{\mathcal{Q}(f)+|f(o)|^2}$, where $o \in X$ is some fixed vertex. The restriction of $\mathcal{Q}$ to $\mathcal{D}_0\cap \ell^2(X,m)$ gives rise to a regular Dirichlet form, see \cite[Theorem~1.19]{KLW}, and we denote by $L$ the associated self-adjoint operator on $\ell^2(X,m)$.

For a summable or positive function $f$ over a discrete set $A$ (typically a subset of $X$ or $X\times X$), we write $$\sum_{X} f=\sum_{x \in X} f(x).$$
Given $x \in X$, we write $1_x = 1_{\{x\}} \in C_c(X)$ for the function taking the value 1 at $x$ and 0 elsewhere.
\medskip

A   function $w:X \to [0,\infty)$ is   a \emph{Hardy  weight for $(b,c)$ over $(X,m)$} if
\[\mathcal{Q}(\varphi)\ge \sum_{ X}mw\varphi^2\]
for all $\varphi\in C_c(X)$. Whenever the inequality holds, it can be extended to all $\varphi\in \mathcal{D}_0$.
The graph $(b,c)$ is called \emph{transient} if  there exists a Hardy weight $w \neq 0$, cf.~\cite{F00,KLW} which is clearly the case if $c\neq0$. It is well-known \cite[Theorem~6.1 and 6.29]{KLW} that a graph  is transient if and only if for every $y \in X$,  there is a unique $G_y \in \D_0$ such that $\mathcal{L} G_y = 1_y$. The function $$G\colon X \times X \to (0,\infty), \quad G(x,y) = G_y(x)$$ is called the {\em Green's function} of $(b,c)$ associated with $\mathcal{L}$. 
The Green's operator is then defined as $G\colon\mathcal{G} \to C(X)$, where 
\begin{align*}
 	\mathcal{G} =\{k\colon X\to \R \mid\sum_{y\in X}G(x,y)|k(y)|< \infty \mbox{ for all } x \in X \}, \end{align*}
	and
	\begin{align*}
 		Gk(x)=\sum_{y\in X}G(x,y)k(y), 
\end{align*}
for $ k \in \mathcal{G}$  and   $x \in X$.
Every $u \in \mathcal{F}$ arising as $u=Gk$ with $k \in \mathcal{G}$ is called a {\em potential}. We write $$\mathcal{P}=\{u\in \mathcal{F} \mid u=Gk \text{ for some } k \in \mathcal{G}\}$$ for the set of all potentials. It can be shown, see e.g. \cite[Lemma~4]{HKPIV}, that $c\in \mathcal{G}$ and $Gc\leq 1$.
\medskip

A Hardy  weight $w$ is {\em critical} if for every Hardy weight $w^{\prime} \geq w$, we have   $w^{\prime} = w$. It is well known  \cite[Theorem~5.3]{KPP20} that criticality of $w$ is equivalent to the existence of a unique (up to multiplication by scalars) strictly positive $v \in \mathcal{F}$ such that $\mathcal{L}v = wv$. Such a function $v \in \mathcal{F}$ is called an {\em Agmon ground state} for $w$. 
A critical Hardy  weight $w$ is said to be {\em positive critical} if $v$ is square integrable with respect to the measure $wm$ (which might not have full support), i.e., $v \in \ell^2(X,wm)$,	 which is equivalent to the Hardy inequality admitting a minimizer, \cite[Theorem~6.2]{KPP20}. A critical Hardy weight is {\em null-critical} if it is not positive critical. Null-critical Hardy weights are also {\em optimal near infinity}, which means that outside of finite sets, they cannot be increased by a factor greater than one, 
 cf.\@ \cite[Proposition~15]{HKPII}. Correspondingly, null-critical Hardy weights are also referred to as {\em optimal Hardy weights}. 

\medskip

Our first main result is a new criterion for obtaining null-critical Hardy weights over general graphs. 

\begin{theorem}[Null-critical Hardy weights] \label{thm:null_critical_weights_Hardy}
	Let  $(b,c)$ be a transient connected graph over $(X,m)$. Then, for all 	strictly positive, superharmonic
	 $u \in \mathcal{P}$ with $\mathcal{L}u \in \ell^1(X,m)$, the function
	 $w = {\mathcal{L} u^{\frac{1}{2}}}/{u^{\frac{1}{2}}}$ is a  critical  Hardy weight.
If furthermore,  $G(c/m)(x) <1$ for some $x \in X$, then each such $w$ is null-critical (and, hence, optimal).
\end{theorem}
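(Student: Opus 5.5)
Set $k := m\,\mathcal{L}u \geq 0$, so that $u = Gk$ (interpreting the Green's operator suitably with respect to $m$) and $\sum_X k<\infty$ by assumption. The plan has three steps: show $w$ is a Hardy weight, show it is critical by constructing a null-sequence for $u^{1/2}$, and show null-criticality by proving $u^{1/2}\notin\ell^2(X,wm)$.

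First I check that $w$ is a Hardy weight. Since $u>0$ is superharmonic, Jensen/Cauchy–Schwarz gives $u^{1/2}\in\mathcal F$, and
\[
\mathcal{L}u^{1/2}(x) \;=\; \frac{1}{2m(x)u^{1/2}(x)}\Big(\sum_y b(x,y)\big(u^{1/2}(x)-u^{1/2}(y)\big)^2 \Big) + \frac{\mathcal L u(x)}{2u^{1/2}(x)} + \frac{c(x)}{2m(x)}u^{1/2}(x).
\]
This is the usual pointwise identity for $\mathcal{L}(f^{1/2})$ in terms of $\mathcal{L}f$. In particular $w\ge0$. The positive solution $v=u^{1/2}$ of $(\mathcal L-w)v=0$ then yields the Hardy inequality $\mathcal Q(\varphi)-\sum mw\varphi^2\ge0$ on $C_c(X)$ by the Allegretto–Piepenbrink/ground state representation. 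For this I use the ground state transform, which is valid without local finiteness as long as $v\in\mathcal F$.

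For criticality I use the characterization via null-sequences: it suffices to find $e_n\in C_c(X)$ with $0\le e_n$, $e_n\to v$ pointwise, and $h(e_n):=\mathcal Q(e_n)-\sum mwe_n^2\to0$. Following the idea in the introduction, approximate the potential rather than $v$ directly. Choose finite sets $K_n\uparrow X$, set $k_n=k1_{K_n}$ and $u_n=Gk_n\in\mathcal D_0$ (a finite combination of $G_y$), and take $v_n=u_n/u^{1/2}$. Then $v_n\le u^{1/2}$ and $v_n\to u^{1/2}$ pointwise by monotone convergence. By the ground state representation,
\[
h(\varphi)=\tfrac12\sum b(x,y)v(x)v(y)\big(\tfrac{\varphi}{v}(x)-\tfrac{\varphi}{v}(y)\big)^2.
\]
I need to show that $h(v_n)$ stays bounded, or even tends to zero, in a form suited to the closure argument. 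A direct computation using $\mathcal L u_n=k_n/m$ should express $h(v_n)$ through $\sum_X k_n\cdot(\text{something bounded})$, up to differences $\sum k(u-u_n)/u$, which vanish by dominated convergence because $k\in\ell^1$. Here the hypothesis $\mathcal Lu\in\ell^1(X,m)$ enters.

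Once $h$-boundedness and pointwise convergence are in hand, the standard argument applies: a weak limit in the form space of $h$ with $h(v)$-energy converging yields $v$ as a ground state in the extended space $\mathcal D_0(h)$. After cutting off $u_n$ to finite support via $G_y\in\mathcal D_0$ and a diagonal argument, this gives criticality. I would first try to prove $h(v_n-v_m)\to0$ directly from the explicit formula. I expect this estimate, handling the non-local (non-locally-finite) terms without an oscillation condition, to be the main obstacle; the key trick should be that $u_n\le u$ controls the cross terms $b(x,y)$ through $\sum_y b(x,y)u(y)<\infty$.

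For null-criticality, compute $\sum_X mwv^2=\sum_X m\,u^{1/2}\mathcal Lu^{1/2}$. Using the identity above, this equals $\tfrac12\sum_X k+\tfrac12\sum c\,u+\text{(energy term)}$. The energy term equals $\tfrac12\mathcal Q(u^{1/2})$-type quantity minus the killing part, and a Green's function computation, $\sum k\,G(c/m)$ versus $\sum k$, shows that the sum diverges, or equivalently that $v\notin\ell^2(wm)$, exactly when the harmonic part $1-G(c/m)$ does not vanish identically. By connectedness and the minimum principle, $G(c/m)(x)<1$ at one point implies it everywhere. I expect this deduction to reduce to showing that $\sum mwv^2<\infty$ would force $v\in\mathcal D_0$ with $\mathcal Lv=wv$, hence a minimizer with $1-G(c/m)\equiv0$, a contradiction.
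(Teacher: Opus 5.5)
Your Hardy-weight step is correct. The genuine gap is in your criticality step. With $h=\mathcal{Q}-w$ as in your notation and $\mathcal{Q}_v$ the ground-state-transformed form with weights $b(x,y)v(x)v(y)$, $v=u^{1/2}$, the functions $v_n=Gk_n/u^{1/2}$ are strictly positive everywhere. So they are not a null-sequence, and all of the work is pushed into the deferred ``cut off $u_n$ to finite support and diagonalize'' step. That step is not a technicality; it is the whole theorem. Whenever $\mathcal{L}u$ is finitely supported, in particular in the central case $u=G_o$ of Theorem~\ref{thm:optGF}, you have $u_n=u$ and $v_n=v$ for all large $n$. What remains to be shown is then literally that $v$ admits a null-sequence, i.e.\ criticality itself, so the scheme is circular.

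Approximating $u_n$ in $(\mathcal{D}_0,\mathcal{Q})$ does not repair this. For $\varphi\in C_c(X)$ the ground state transform gives $h(\varphi/u^{1/2})=\mathcal{Q}_v(\varphi/u)$, and on edges where $u$ is small this costs a factor of order $u^{-1}$ relative to the $\mathcal{Q}$-energy of $\varphi$. Hence $\mathcal{Q}$-convergence of cut-offs yields no control of $h$. Your proposed trick, that $u_n\le u$ controls cross terms through $\sum_y b(x,y)u(y)<\infty$, gives pointwise finiteness but no uniform energy bound.

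The missing idea is a logarithmic truncation of $u$ itself. The paper takes $\lambda_T(u)$ with $\lambda_T(t)=(\log T-|\log t|)_+$ for $t>0$, split as $\lambda_T^+-\lambda_T^-$. The elementary inequality $\sqrt{tr}\,|\lambda_T^\pm(t)-\lambda_T^\pm(r)|\le|t-r|$ is exactly adapted to the weights $\sqrt{u(x)u(y)}$. It yields $\mathcal{Q}_v(\lambda_T^\pm(u))\le\mathcal{Q}(u,\lambda_T^\pm(u))\le\sum_X m\,\mathcal{L}u\,\lambda_T^\pm(u)\le\log T\,\|\mathcal{L}u\|_1$. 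Hence $\lambda_T(u)/\log T\to1$ pointwise with $\mathcal{Q}_v$-energy $O(1/\log T)$, and this is where $\mathcal{L}u\in\ell^1(X,m)$ really enters.

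Your approximation $Gk_n$ of the potential does appear in the paper, but only inside Lemma~\ref{lem:weak_green_formula_on_P}. There it proves $\lambda(u)\in\mathcal{D}_0$ and the one-sided Green's formula $\mathcal{Q}(u,\lambda(u))\le\sum_X m\,\mathcal{L}u\,\lambda(u)$ via Fatou. Since $\{u>1/T\}$ need not be finite, Lemma~\ref{lem:technical_inequality} is still needed to get $\lambda_T(u)\in\mathcal{D}_0(\mathcal{Q}_v)$.

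Your null-criticality outline points in the right direction. Summing your pointwise identity and using $\sum_X cu=\sum_X m\,\mathcal{L}u\,G(c/m)$ gives $h(v)=-\tfrac12\sum_X m\,\mathcal{L}u\,(1-G(c/m))<0$ whenever $v\in\mathcal{D}$. This contradicts the Hardy inequality once you know that positive criticality forces $v\in\mathcal{D}_0$, which is the input the paper imports from \cite[Propositions~21 and~22]{HKPII}. Only this one direction is needed; your ``exactly when'' claim is neither needed nor justified.
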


 \begin{remark} [The supersolution construction] Considering weights of the form $\mathcal{L}u^{1/2}/u^{1/2}$ for some strictly positive and superharmonic function is called the {\em supersolution construction}. It was established first in the continuum setting by Devyver, Fraas and Pinchover in \cite{DFP14} and later transferred to an abstract class of graphs by two of the authors with Pinchover in \cite{KPP18}. In order to obtain a null-critical Hardy weight, \cite[Theorem~3.1]{KPP18} requires $u$ to be harmonic outside a finite set, to be proper (i.e.,\@ $u^{-1}(K)$ is finite for each compact set $K \subseteq (0,\infty)$), and to satisfy a bounded oscillation condition on values on the edges. Clearly, $\mathcal{L}u \in \ell^1(X,m)$ is a substantially weaker condition than   harmonicity outside finite sets. Moreover, the formerly assumed properness and bounded oscillation condition  exclude non-locally finite graphs which we no longer require.
Yet, these criteria are complemental to the assumption $u \in  \mathcal{P}$ in the above theorem. The rather subtle relations are explained for example in detail in \cite[Remark~2.27, Theorem~2.28]{hake2025optimal}. We also point out that neither of the assumptions is necessary to obtain a null-critical and hence optimal Hardy weight, see \cite{FR25} for non-necessity of the assumption $\mathcal{L}u \in \ell^1(X,m)$
and \cite{HKPV} for $u \in  \mathcal{P}$.
\end{remark}

\begin{remark}[The condition $G(c/m)<1$] The condition $G(c/m)<1$ is trivially fulfilled for $c=0$. In a recent paper, \cite{HKPIV}, the condition and $G(c/m) = 1$ was characterized in terms of a Liouville theorem for bounded harmonic functions. Also, $G(c/m)(x) < 1$ for some $x \in X$ if and only if this holds for all $x \in X$, cf.\@ \cite[Theorem~5 or Theorem~26]{HKPIV}.  Furthermore, this criterion is related to stochastic completeness properties and  validity of a Green's formula for the graph under consideration. We will explore this in more detail in the theorems below.
\end{remark}

\begin{remark}[Superharmonic $u\in \mathcal{D}_0$]
	We point out that all superharmonic $u \in  \mathcal{D}_0$  are  necessarily  potentials, cf.\@ \cite[Lemma~8]{HKPII}. 
\end{remark}

Our next result now specializes on the case of choosing $u$ as a Green's function. 
The advantage is that the Green's function automatically satisfies all conditions  except for the last one in case of $c\neq 0$. 
However, in this case we can characterize this condition in terms of stochastic completeness properties of graphs. 
We only state what is needed here and refer to \cite[Chapter~7]{KLW} for in-depth background information on the concept of stochastic completeness. 
A connected graph $(b,c)$ over $(X,m)$ is called {\em stochastically complete} if $e^{-tL}1 = 1$ for all  $t \geq 0$, where $e^{-tL}$ denotes the semigroup associated with the Dirichlet-Laplacian $L$, see \cite[Chapter~1]{KLW}. 
Any graph with non-vanishing killing term $c \neq 0$ is necessarily {\em stochastically incomplete}, i.e.\@ not stochastically complete, and we call $(b,c)$  {\em stochastically complete at infinity} if for some (equivalently all) $t >0$ and for some (equivalently all) $o \in X$, we have 
\begin{align*}
	M_t(o) := e^{-tL}1(o) \, + \, \int_0^{t} \Big( e^{-sL} \frac{c}{m}  \Big)(o) \,ds=1.
\end{align*}
The underlying idea of stochastic completeness is that all the heat is conserved within the graph. On the other hand, a killing term constantly removes heat from the graph so it becomes stochastically incomplete. Stochastic completeness at infinity means that the heat is conserved within the graph if we also take into account the heat removed by the killing term.
Clearly, every stochastically complete graph is also stochastically complete at infinity.

\begin{theorem}[Supersolution construction for the Green's function] \label{thm:optGF}
	Let $(b,c)$ be a transient connected graph over $(X,m)$ and fix $o \in X$. 
	Then 
	\begin{align*}
		{w} = \frac{\mathcal{L}G^{1/2}_o}{G^{1/2}_o}
	\end{align*}
	defines a critical Hardy weight. 
	The weight $w$ is null-critical (and hence optimal) if additionally, 
	\begin{align*}
		\sum_{X}  G_oc  \neq m(o).
	\end{align*}
		%
	In particular, $w$ is null-critical if one of the following  conditions are satisfied:
		\begin{itemize}
			\item $c = 0$,
			\item $(b,c)$ is not stochastically complete at infinity,
			\item $(b,c)$ is stochastically complete at infinity and for some $x\in X$
			$$\lim_{t \to \infty} \sum_X m e^{-tL}1_x \neq 0.$$ 
		\end{itemize}
\end{theorem}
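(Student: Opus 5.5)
The plan is to deduce the result from Theorem~\ref{thm:null_critical_weights_Hardy} with the choice $u=G_o$, and then to translate the abstract condition $G(c/m)(x)<1$ into the three listed criteria.

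\textbf{Criticality.} I will check the hypotheses of Theorem~\ref{thm:null_critical_weights_Hardy} for $u=G_o$.
\begin{itemize}
\item Since $(b,c)$ is connected and transient, $G_o=G(\cdot,o)$ is strictly positive.
\item Since $\mathcal{L}G_o=1_o\ge 0$, the function $G_o$ is superharmonic.
\item By definition of the Green's operator, $G_o(x)=\sum_y G(x,y)1_o(y)=G1_o(x)$ with $1_o\in\mathcal{G}$ trivially, so $G_o\in\mathcal{P}$.
\item Finally, $\mathcal{L}G_o=1_o\in\ell^1(X,m)$.
\end{itemize}
Hence $w=\mathcal{L}G_o^{1/2}/G_o^{1/2}$ is a critical Hardy weight.

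\textbf{Null-criticality.} The key identity comes from the symmetry of the Green kernel with respect to $m$. Since $L$ is self-adjoint on $\ell^2(X,m)$ and $\mathcal{L}G_y=1_y$, the kernel $g(x,y)=G(x,y)/m(y)$ is symmetric, i.e.\ $G(o,x)/m(x)=G(x,o)/m(o)$. I will quote this from \cite{KLW}, or derive it via the Green's formula on $\mathcal{D}_0$ by pairing $\mathcal{Q}(G_x,G_o)$ in both orders. It gives
\[
G(c/m)(o)=\sum_{x}G(o,x)\frac{c(x)}{m(x)}=\frac{1}{m(o)}\sum_X G_o c .
\]
Since $G(c/m)\le 1$, the assumption $\sum_X G_oc\neq m(o)$ means exactly $G(c/m)(o)<1$. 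Theorem~\ref{thm:null_critical_weights_Hardy} then yields null-criticality.

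\textbf{The three criteria.}
\begin{itemize}
\item If $c=0$, the sum $\sum_X G_oc$ vanishes, so it differs from $m(o)>0$.
\item For the other two cases I use the representation $G(c/m)=\int_0^\infty e^{-sL}(c/m)\,ds$, the Green's operator being the integral of the semigroup (monotone convergence, as in \cite{KLW}). This gives
\[
G(c/m)(o)=\lim_{t\to\infty}\bigl(M_t(o)-e^{-tL}1(o)\bigr).
\]
If $(b,c)$ is not stochastically complete at infinity, then $M_t(o)<1$ for all $t>0$. Using that $t\mapsto M_t(o)$ is non-increasing, and $e^{-tL}1\ge 0$, I get $G(c/m)(o)\le M_t(o)<1$.
\item If $(b,c)$ is stochastically complete at infinity, then $M_t\equiv 1$, so $G(c/m)(o)=1-\lim_{t\to\infty}e^{-tL}1(o)$. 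By symmetry of the heat kernel with respect to $m$,
\[
e^{-tL}1(x)=\frac{1}{m(x)}\sum_X m\,e^{-tL}1_x .
\]
Hence non-vanishing of $\lim_t\sum_X m e^{-tL}1_x$ gives $G(c/m)(x)<1$ at that $x$. This suffices, since by \cite[Theorem~5]{HKPIV} the condition at one vertex is equivalent to the condition at all vertices.
\end{itemize}

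\textbf{Main obstacle.} The expected main difficulty is justifying two semigroup facts for possibly non-locally finite graphs: the monotonicity of $M_t$, and the identity $\int_0^\infty e^{-sL}(c/m)\,ds=G(c/m)$. I would first try to cite both from \cite{KLW} or \cite{HKPIV}. Failing that, I would prove them by exhausting $X$ with finite sets, where they reduce to finite-dimensional computations, and then pass to the limit by monotone convergence.
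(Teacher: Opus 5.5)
Your proposal is correct and follows the paper's route: apply Theorem~\ref{thm:null_critical_weights_Hardy} to $u=G_o$ and reduce every condition to $G(c/m)<1$. The differences are in justification only: the paper cites \cite[Theorems~1 and~26]{HKPIV} for the bullet criteria and leaves the identity $G(c/m)(o)=m(o)^{-1}\sum_X G_oc$ implicit, whereas you derive both by hand (from the symmetry of $G(x,y)/m(y)$ and from $G(c/m)=\lim_{t\to\infty}(M_t-e^{-tL}1)$), and the monotonicity of $M_t$ you flag as the main obstacle follows at once from $M_{t+s}=e^{-tL}M_s+\int_0^t e^{-rL}(c/m)\,dr\le M_t$, using $M_s\le 1$.
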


\begin{remark}[$c\neq 0$]
	The criterion $\sum_X G_o c \neq m(o)$ can be interpreted as a violation of the Green's formula for the potential $u=G_o$. It can be seen, \cite[Theorem~1 and Theorem~13]{HKPIV}, that the validity of such a Green's formula for potentials is tightly related to a Liouville theorem, namely the non-existence of non-trivial bounded, harmonic functions. Clearly, for $c=0$ the criterion $\sum_X G_o c \neq m(o)$ immediately gives an optimal Hardy weight. 
	
	We discuss the case of $c \neq 0$ in more detail: an easy example which shows what can go wrong is the standard Laplacian on $\N_0$ with a Dirichlet boundary condition at $0$ which gives a non-vanishing killing term at $1$, i.e., $c=1_{\{1\}}$. In this case, the Green's function is given by $G(x,1) = 1$ for all $x \in \N_0$, and we have $Gc(1)=\sum_{x \in \N_0} G_1c = c(1)=1$. 
On the other hand, $w=\mathcal{L}G^{1/2}_1/G^{1/2}_1=1_{\{1\}}$ is a Hardy weight for the graph  which equals the killing term. This is of course critical (as $\N$ without a killing term is recurrent), but not null-critical as $\sum_X G_1 w=1<\infty$. Indeed, this a special case of a more general phenomenon that whenever the graph $b$ is recurrent and $c\ne0$, then one has $Gc=1$, see \cite[Theorem~23]{HKPIV}. 

This gives a first impression of what can go wrong in the presence of a killing term. We next address the second and third bullet points in the 'in particular'-part of the theorem. 
In fact, the 
authors, together with Marcel Schmidt, have shown in \cite[Theorem~1, Theorem~8 and Theorem~26]{HKPIV} that the condition
$\sum_X G_o c = m(o)$ is equivalent to the graph $(b,c)$ being stochastically complete at infinity and at the same time, for some (equivalently all) $x \in X$ 
\[
\lim_{t \to \infty} \sum_X me^{-tL}1_x  = \lim_{t \to \infty} e^{-tL}1(x) = 0.
\]
This gives a characterization of the condition $\sum_X G_o c \neq m(o)$ in terms of stochastic completeness at infinity and the long time behavior of the heat kernel, i.e., the latter two bullet points.

For the second case, i.e.,\@ when $(b,c)$ is not stochastically complete at infinity,  this requires the graph $(b,0)$ without killing term to be stochastically incomplete,  (see \cite[Theorem~3.18]{KLW} and note that $\alpha$-subharmonic functions for $(b,c)$ are $\alpha$-subharmonic for $(b,0)$). Examples of such graphs are studied widely, see e.g. \cite{KLW13,Woj09,Woj11}. In \cite[Theorem 2]{KL12} a construction is given how a stochastically incomplete graph can become stochastically complete at infinity by adding a large killing term. This in turn informs that for stochastically incomplete graphs $(b,0)$ the graph $(b,c)$ stays stochastically incomplete at infinity if $c$ is not too large. For a characterization for spherically symmetric graphs see \cite[Theorem 9.25]{KLW}.

For the third case, i.e.,\@ when $(b,c)$ is stochastically complete at infinity, the expression  $\sum_X m e^{-tL}1_x  $ can be understood as  the total heat within the graph at time $t$ if we start with all the heat concentrated at $x$ at time $t=0$. The condition that the limit does not tend to zero	 means that not all of the heat eventually leaves the graph. The contrary again can be achieved by adding a very large killing term. 
\end{remark}

 We now turn to the fractional Laplacian for a graph, see  \cite[Section~4]{HKPII} for a detailed discussion. 
 Given a graph $(b,c)$ over a discrete measure space $(X,m)$, the {\em fractional Laplace $L^{\sigma}$} for $\sigma \in (0,1)$ is defined via the spectral calculus of the Laplacian $L$ associated to the graph. 
 Furthermore, there is a graph  $(b_{\sigma}, c_{\sigma})$ over $(X,m)$ such that $L^\sigma$ is a restriction of the formal Laplacian $\mathcal{L}^{\sigma}$ associated with $(b_{\sigma}, c_{\sigma})$, see \cite[Theorem~24]{HKPII}. It shall be noted that if $c=0$ and $b$ is stochastically complete, then $c_{\sigma}=0$ for all $\sigma \in (0,1)$, see \cite[Theorem~24]{HKPII}.
If  the graph $(b_{\sigma}, c_{\sigma})$ is transient, we denote the Green's function by $G_{\sigma}$, and write $G=G_1$.

Indeed, the Green's function $G_\sigma=G_{\sigma,o}$ is given by the {discrete Riesz kernel} at the vertex $o$ which is defined  for $\alpha >0$ as $k_\alpha = k_{\alpha,o}\colon X\to[0,\infty]$ 
\[
k_{\alpha} = \frac{1}{|\Gamma(\alpha)|} \int_0^{\infty} e^{-tL}1_o \frac{dt}{t^{1-\alpha}}.
\]
Whenever, $(b_\sigma,c_\sigma)$ is transient, we have $k_{\sigma} = G_{\sigma}$, see \cite[Theorem~25]{HKPII}.

Furthermore, we say that a graph $b$ over $(X,m)$ satisfies a {\em Nash inequality of dimension $d > 0$} if there is a constant $N > 0$ such that for all $\varphi \in C_c(X)$,
\begin{align*}
	{\|\varphi\|}_2^{2+\frac{4}{d}} \leq N \mathcal{Q}(\varphi){\|\varphi\|}_1^{\frac{4}{d}}	
\end{align*}
and a {\em Sobolev inequality of dimension $d > 2$} if there is a constant $S > 0$ such that for all $\varphi \in C_c(X)$,
\begin{align*}
	{\|\varphi\|}_{\frac{2d}{d-2}}^2 \leq S \mathcal{Q}(\varphi).
\end{align*}

\begin{theorem} \label{thm:FrLa}
		Let $b$ be a stochastically complete connected graph over $(X,m)$. If $b_\sigma$ is transient, then
		$$w_{\sigma} =  \frac{\mathcal{L}^{\sigma}k_{\sigma}^{1/2}}{k_{\sigma}^{1/2}}$$  is a null-critical (and hence optimal) Hardy weight.
		If for $\sigma \in (0,1)$, the graph $b$ satisfies a Nash inequality of dimension $d > 2\sigma$, then $b_{\sigma}$ is transient. If $b$ satisfies a Sobolev inequality of dimension $d>2$, then $b_{\sigma}$ is transient for all $\sigma \in (0,1)$.
	\end{theorem}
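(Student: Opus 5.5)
The first assertion will follow from Theorem~\ref{thm:optGF}, applied to the graph $(b_\sigma,c_\sigma)$ over $(X,m)$.

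\emph{Removing the killing term.} By \cite[Theorem~24]{HKPII}, stochastic completeness of $b$ (with $c=0$) gives $c_\sigma=0$. So $(b_\sigma,c_\sigma)=(b_\sigma,0)$ is a graph without killing term, and $\mathcal{L}^\sigma$ is its formal Laplacian.

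\emph{Identifying the Green's function.} If $b_\sigma$ is transient, its Green's function $G_{\sigma,o}$ exists. By \cite[Theorem~25]{HKPII} it coincides with the Riesz kernel $k_\sigma=k_{\sigma,o}$. I should also check connectedness of $b_\sigma$, since Theorem~\ref{thm:optGF} assumes it. Connectedness of $b$ gives $e^{-tL}1_o>0$ everywhere for $t>0$. The subordination formula for $b_\sigma(x,y)$ (an integral of $e^{-tL}$ kernels against a positive Lévy density) then gives $b_\sigma>0$ off the diagonal, so $b_\sigma$ is even complete. Theorem~\ref{thm:optGF} with $c=0$ (first bullet) now shows that $w_\sigma=\mathcal{L}^\sigma k_\sigma^{1/2}/k_\sigma^{1/2}$ is null-critical.

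\emph{Transience from a Nash inequality.} By the standard Varopoulos/Carlen--Kusuoka--Stroock argument, a Nash inequality of dimension $d$ gives the ultracontractive bound
\[
e^{-tL}1_o(x)\le C\, m(o)\, t^{-d/2}, \qquad t>0.
\]
Using this for $t\ge1$ and the trivial bound $e^{-tL}1_o\le 1$ for $t<1$ (valid since the semigroup is Markovian), we get
\[
k_\sigma(x)\le \frac{1}{\Gamma(\sigma)}\Big(\int_0^1 t^{\sigma-1}\,dt+C\,m(o)\int_1^\infty t^{\sigma-1-d/2}\,dt\Big)<\infty,
\]
because $d>2\sigma$. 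Next I show that finiteness of $k_\sigma$ implies transience of $b_\sigma$. The kernel $k_\sigma$ is the $0$-resolvent $\int_0^\infty e^{-tL^\sigma}\,dt$ of $L^\sigma$ by subordination, or equivalently by $\Gamma(\sigma)^{-1}\int_0^\infty t^{\sigma-1}e^{-tL}\,dt=L^{-\sigma}$. A finite Green's function for the Dirichlet form of $b_\sigma$ is equivalent to transience \cite[Chapter~6]{KLW}, which finishes this part.

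\emph{Transience from a Sobolev inequality.} A Sobolev inequality of dimension $d>2$ implies a Nash inequality of the same dimension $d$, via Hölder interpolation of $\|\varphi\|_2$ between $\|\varphi\|_1$ and $\|\varphi\|_{2d/(d-2)}$. Since $d>2>2\sigma$ for every $\sigma\in(0,1)$, the previous step gives transience of $b_\sigma$ for all $\sigma$.

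\emph{Main obstacle.} The delicate step is the implication from finiteness of the Riesz kernel to transience of the form of $b_\sigma$. It requires identifying $L^\sigma$, viewed as the operator of the regular form on $\mathcal{D}_0(\mathcal{Q}_\sigma)\cap\ell^2$, with the spectral power of $L$. I would take this identification from \cite[Theorem~24 and Theorem~25]{HKPII} rather than reprove it.
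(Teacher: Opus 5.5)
Your proposal is correct and follows essentially the paper's route. You get $c_\sigma=0$ from stochastic completeness, identify $k_\sigma=G_\sigma$ via \cite[Theorem~25]{HKPII}, apply Theorem~\ref{thm:optGF} with $c=0$ for null-criticality, and obtain transience from convergence at $\infty$ of the Riesz-kernel integral using the Nash-induced heat kernel bound; the only deviation is that you reduce the Sobolev case to the Nash case by H\"older interpolation, whereas the paper cites Davies directly for the same heat kernel bound.
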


In the presence of known asymptotics of the discrete Riesz kernel, one also obtains asymptotic expansions for the Hardy weight $w_{\sigma}$. 
We will illustrate this for the fractional Laplacian associated with the (stochastically complete) standard lattice graph over $\Z^d$. We write $\Delta$ for the Laplacian associated with the graph with weights $b(x,y) = 1$ if $|x-y|=1$ and $b(x,y) = 0$ otherwise for $x,y\in \Z^d$.  
It has  been shown in \cite[Theorem~1.1]{HKP} that  
 for $\sigma \in (0,1]$ with $\sigma < d/2$ if $d \in \{1,2\}$, 
  the function ${w}_{\sigma, \alpha_0}: \Z^d \to (0,\infty),$
\[
    {w}_{\sigma, \alpha_0} = \frac{k_{\alpha_0- \sigma}}{k_{\alpha_0}} = \frac{\Delta^{\sigma}k_{\alpha_0}}{k_{\alpha_0}}
\]
is a null-critical Hardy weight for the fractional Laplacian $\Delta^{\sigma}$, where $\alpha_0 = (d/2 + \sigma)/2$, and one gets the expansion
\begin{align*}
{w}_{\sigma,\alpha_0} = c_{d,\sigma}|x|^{-2\sigma} + \mathcal{O}(|x|^{-2\sigma-2}) 
\end{align*}
for $|x| \to \infty$ with explicit constant 
\[
 c_{d,\sigma} = 4^{\sigma}\, \frac{\Gamma\Big( \frac{d/2+\sigma	}{2} \Big)^2}{\Gamma\Big( \frac{d/2-\sigma}{2} \Big)^2}.
\]
For the one-dimensional case $d=1$ and $\sigma \in (0,1/2)$, this result had already been proven before in \cite[Theorem~1]{KN} with slightly weaker error term  $ \mathcal{O}(|x|^{-2\sigma -1})$.

\medskip

We show next that the null-critical Hardy weight $w_{\sigma}$ from Theorem~\ref{thm:FrLa}   has the same leading term in its asymptotics as $\widetilde{w}_{\sigma,\alpha_0}$.

\begin{theorem}[Null-critical Hardy weight on $\Z^d$] \label{thm:Zd}
	Let $d \in \N$ and $\sigma \in (0,1]$ with $\sigma < d/2$ if $d \in \{1,2\}$. 
	Then the null-critical Hardy weight $w_{\sigma} = {\Delta^{\sigma} k_{\sigma}^{1/2}}/{k_{\sigma}^{1/2}}$   satisfies the asymptotics
	\[
	w_{\sigma}(x) = c_{d,\sigma}|x|^{-2\sigma} +\mathcal{O}\big( |x|^{-q} \big)
	\]
	as $|x| \to \infty$, where $q=\min\{2,d/2+3\sigma\}$ if $\sigma \in (0,1)$ and $q=3$ if $\sigma=1$. 
\end{theorem}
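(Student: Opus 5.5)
We want the asymptotics of $w_\sigma = \Delta^\sigma k_\sigma^{1/2}/k_\sigma^{1/2}$ on $\Z^d$. Null-criticality is already given by Theorem~\ref{thm:FrLa}, since the lattice is stochastically complete and satisfies a Sobolev inequality for $d\ge 3$. For $d\in\{1,2\}$ with $\sigma<d/2$ we instead use a Nash inequality of dimension $d>2\sigma$. It therefore remains only to compute the asymptotics.

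The plan is to write $\Delta^\sigma$ as the formal Laplacian of the graph $b_\sigma$ with $c_\sigma=0$, that is,
\[
w_\sigma(x) = \sum_{y} b_\sigma(x,y)\Big(1-\sqrt{k_\sigma(y)/k_\sigma(x)}\Big),
\]
and to use the algebraic identity
\[
1-\sqrt{t} = \tfrac12(1-t) + \tfrac12\big(1-\sqrt t\big)^2 .
\]
This gives the decomposition
\[
w_\sigma(x) = \frac{\Delta^\sigma k_\sigma(x)}{2k_\sigma(x)} + \frac{1}{2k_\sigma(x)}\sum_y b_\sigma(x,y)\big(\sqrt{k_\sigma(x)}-\sqrt{k_\sigma(y)}\big)^2 .
\]
The first term vanishes for $x\neq o$, because $\Delta^\sigma k_\sigma = 1_o$. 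The whole weight is therefore the second, "energy density" term, and we must evaluate it asymptotically. We will use known kernel asymptotics. The first is
\[
k_\sigma(x) = C_{d,\sigma}|x|^{2\sigma-d}\big(1+\mathcal{O}(|x|^{-2})\big),
\]
as in \cite{HKP}. The second is that the jump kernel satisfies $b_\sigma(x,y)\asymp |x-y|^{-d-2\sigma}$, with the leading constant of the continuum fractional Laplacian and relative error $\mathcal{O}(|x-y|^{-2})$. For $\sigma=1$ we have $b_1=b$, the nearest-neighbour graph, and the computation is a plain Taylor expansion of $\sqrt{k_1}$, with $k_1$ the lattice Green function. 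This yields $(d-2)^2/4\,|x|^{-2}$, matching $c_{d,1}$, with error $\mathcal{O}(|x|^{-3})$ coming from the next correction of $k_1$ and its discrete derivatives.

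For $\sigma\in(0,1)$ we compare with the continuum, where $(-\Delta)^\sigma |x|^{\sigma-d/2}/|x|^{\sigma-d/2} = c_{d,\sigma}|x|^{-2\sigma}$. Here $|x|^{-(d-2\sigma)/2}$ is exactly $\sqrt{k_\sigma}$ to leading order, and this is the well-known ground-state identity producing $c_{d,\sigma}$ as the sharp fractional Hardy constant. The reference point is the identity
\[
w_\sigma = \frac{\Delta^\sigma k_\sigma^{1/2}}{k_\sigma^{1/2}}
\quad\text{versus}\quad
\widetilde w_{\sigma,\alpha_0} = \frac{\Delta^\sigma k_{\alpha_0}}{k_{\alpha_0}},
\qquad \alpha_0=\tfrac{d/2+\sigma}{2} .
\]
Since $k_{\alpha_0}\sim |x|^{2\alpha_0-d} = |x|^{\sigma-d/2}$, this has the same power as $k_\sigma^{1/2}$. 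So we set $f = k_\sigma^{1/2} - \lambda k_{\alpha_0}$, with $\lambda$ chosen to cancel the leading terms. Then $f(x) = \mathcal{O}(|x|^{\sigma-d/2-2})$, and we get
\[
w_\sigma - \widetilde w_{\sigma,\alpha_0} = \frac{\Delta^\sigma f}{k_\sigma^{1/2}} + \widetilde w_{\sigma,\alpha_0}\,\frac{\lambda k_{\alpha_0}-k_\sigma^{1/2}}{k_\sigma^{1/2}} .
\]
The second term is $\mathcal{O}(|x|^{-2\sigma-2})$. Combining this with \cite[Theorem~1.1]{HKP} gives the leading term $c_{d,\sigma}|x|^{-2\sigma}$.

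The main obstacle is bounding $\Delta^\sigma f(x)$ for a function with $f=\mathcal{O}(|x|^{-\beta})$, where $\beta = d/2-\sigma+2$, and having suitably decaying discrete derivatives. We split the sum $\sum_y b_\sigma(x,y)(f(x)-f(y))$ into two regions. In the region $|y-x|<|x|/2$ we use a second-order Taylor expansion and the symmetry of $b_\sigma$, which gives $\mathcal{O}(|x|^{-\beta-2\sigma})$. In the far region we bound crudely by $|x|^{-d-2\sigma}\sum_y|f(y)| + |f(x)|\,|x|^{-2\sigma}$. The sum $\sum_{|y|\lesssim|x|}|f(y)|$ grows at most like $|x|^{d-\beta}$ or $\log|x|$, and tail contributions decay like $|x|^{-d-2\sigma}$, since $f$ is bounded near the origin. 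Dividing by $k_\sigma^{1/2}\sim|x|^{\sigma-d/2}$ gives the error terms $|x|^{-2\sigma-2}$ and $|x|^{-d-2\sigma}\cdot|x|^{d/2-\sigma} = |x|^{-d/2-3\sigma}$. Together with the $\mathcal{O}(|x|^{-2})$ corrections in the kernel asymptotics, this accounts for $q=\min\{2,d/2+3\sigma\}$. The delicate part is to obtain discrete-derivative asymptotics of $k_\sigma$ and $k_{\alpha_0}$, which we derive from their heat-kernel integral representations via local CLT expansions of $e^{-t\Delta}1_o$.
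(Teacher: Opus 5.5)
Your proposal is correct in outline, but at the key estimate it takes a different and heavier route than the paper. The architecture for $\sigma\in(0,1)$ is the same. You compare with $\widetilde w_{\sigma,\alpha_0}=\Delta^\sigma k_{\alpha_0}/k_{\alpha_0}$ from \cite[Theorem~1.1]{HKP}, set $f=k_\sigma^{1/2}-\lambda k_{\alpha_0}$, and split $w_\sigma-\widetilde w_{\sigma,\alpha_0}$ into $\Delta^\sigma f/k_\sigma^{1/2}$ plus $\widetilde w_{\sigma,\alpha_0}(\lambda k_{\alpha_0}-k_\sigma^{1/2})/k_\sigma^{1/2}$. Your opening ``energy density'' identity is correct but plays no role for $\sigma<1$. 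For $\sigma=1$ the paper simply cites \cite[Theorem~7.2]{KPP18}, whereas your Taylor sketch there needs (classical) difference asymptotics of the lattice Green function.

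The real difference is how $\Delta^\sigma f$ is bounded. Write $\beta=d/2-\sigma$. The paper uses no cancellation at all. It needs only three facts: the pointwise bound $f=\mathcal{O}(|x|^{-\beta-2})$ from Proposition~\ref{thm:RKasymp}; that $\sum_y b_\sigma(x,y)=\sum_y k_{-\sigma}(y-x)$ is a finite constant; and that $b_\sigma(x,y)\lesssim|x|^{-d-2\sigma}$ for $|y|\le|x|/2$. It then applies the triangle inequality $|\Delta^\sigma f(x)|\le\sum_y b_\sigma(x,y)(|f(x)|+|f(y)|)$, split over $|y|\ge|x|/2$ and $|y|\le|x|/2$. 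The local term and the outer region give $\mathcal{O}(|x|^{-\beta-2})$. The region near the origin gives $\mathcal{O}(|x|^{-\beta-2-2\sigma}+|x|^{-d-2\sigma})$, up to a harmless logarithm. After dividing by $k_\sigma^{1/2}\sim|x|^{-\beta}$, this is exactly $q=\min\{2,d/2+3\sigma\}$.

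Your second-order Taylor expansion on $|y-x|<|x|/2$, using the symmetry of $b_\sigma$, removes the loss from the local term $|f(x)|\sum_y b_\sigma(x,y)$. That term is the only source of the exponent $2$ in the paper's $q$. The price is that you need asymptotics for discrete second differences of $k_\sigma$ and $k_{\alpha_0}$, with remainders whose second differences are $\mathcal{O}(|x|^{2\alpha-d-4})$. Proposition~\ref{thm:RKasymp} does not supply these, so you would have to prove them separately, e.g.\ via a local limit theorem or Fourier analysis. The gain is a sharper exponent $\min\{2+2\sigma,d/2+3\sigma\}$ (up to a logarithm when $d/2+\sigma=2$), which of course implies the stated one.

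Your closing bookkeeping is off, though. In your route the relative $\mathcal{O}(|x|^{-2})$ corrections in the kernel asymptotics only produce absolute errors $\mathcal{O}(|x|^{-2-2\sigma})$, so no $\mathcal{O}(|x|^{-2})$ term appears anywhere. The $2$ in $q$ is precisely the cost of forgoing cancellation in $\Delta^\sigma f$. So for the theorem as stated, the ``delicate part'' you single out is unnecessary when $\sigma<1$.
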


For $\sigma=1$ and $d \geq 3$, this result has already been obtained in \cite[Theorem~7.2]{KPP18}. Indeed, the error term in the asymptotics of $w_1$ can be improved to $q=4$, see the forthcoming paper \cite{HKPV}.
We prove the above theorem in the cases $\sigma \in (0,1)$ via estimates on the difference $w_{\sigma} - {w}_{\alpha_0,\sigma}$. For the asymptotics we use the expansion of the discrete Riesz kernel obtained in \cite[Theorem~A.1]{HKP}. We thus obtain  a short proof of the fact that the leading terms  of the two Hardy weights coincide. 

\section{The main technical lemma} \label{sec:prelim}

For a graph $(b,c)$ over $(X,m)$ with formal domain $\mathcal{F}$,  and  $\mathcal{D}$ as defined above,
one gets a bilinear form $\mathcal{Q}: \D \times \D \to \R$ acting as  
\begin{align*}
	\mathcal{Q}(f,g)=\frac{1}{2}\sum_{x,y\in X}b(x,y)(f(x)-f(y))(g(x) - g(y))+\sum_{x\in X}c(x)f(x)g(x).
\end{align*}
On the diagonal we set $\mathcal{Q}(f) = \mathcal{Q}(f,f)$ as  above. Furthermore,  $\mathcal{Q}(f,g)$ can be extended to $f,g \in \mathcal{F}$ such that not both $f$ and $g$ are in $\mathcal{D}$ but $(f(x)-f(y))(g(x)-g(y))\ge0$ and $f(x)g(x) \geq 0$ for all $x,y \in X$, so that while the series may be infinite all involved terms  are  positive. An element $u \in \mathcal{F}$ is called {\em superharmonic} if $\mathcal{L}u \geq 0$, and {\em subharmonic} if $\mathcal{L}u \leq 0$. It is called {\em harmonic} if $\mathcal{L}u=0$, i.e.,\@ $u$ is both super- and subharmonic.

We next recall two facts which were proven in \cite{HKPII} and which will be used in the proofs below. We start with following characterization of potentials in terms of the Green's operator $G$ and the formal Laplacian $\mathcal{L}$.

\begin{proposition}[Proposition~4 of \cite{HKPII}]
	\label{prop:characterisation_potentials}
	A function $u \in \F$ satisfies $\mathcal{L} 	u\in \mathcal{G}$ and $u=G\mathcal{L}u$ if and only if it is a potential, i.e., there exists $k \in \mathcal{G}$ with $u = Gk$. In that case $\mathcal{L} u = k$ and $u = G\mathcal{L} u$.
\end{proposition}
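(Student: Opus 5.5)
The plan is to prove the two implications separately, with the substance in the direction from a potential to the identity $\mathcal{L}u=k$.

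\emph{If $u$ satisfies the conditions, it is a potential.} Suppose $u\in\F$ satisfies $\mathcal{L}u\in\mathcal{G}$ and $u=G\mathcal{L}u$. Then $k:=\mathcal{L}u\in\mathcal{G}$ and $u=Gk$, so $u$ is a potential by definition.

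\emph{If $u$ is a potential, then $\mathcal{L}u=k$.} Let $u=Gk$ with $k\in\mathcal{G}$ and $u\in\F$. The aim is to show $\mathcal{L}u=k$, by interchanging $\mathcal{L}$ with the sum defining $Gk$ and using $\mathcal{L}G_y=1_y$. To justify the interchange, I show that $G|k|$ lies in $\F$, not merely $Gk$.

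First, each $G_y$ is superharmonic, since $\mathcal{L}G_y=1_y\ge 0$. Multiplying $\mathcal{L}G_y(x)\ge 0$ by $m(x)$ gives
\[
\sum_{z\in X} b(x,z)G(z,y)\le (\deg(x)+c(x))\,G(x,y)\qquad\text{for all }x,y\in X.
\]
Multiplying by $|k(y)|$, summing over $y$ and applying Tonelli yields
\[
\sum_{z} b(x,z)\,G|k|(z)\le (\deg(x)+c(x))\,G|k|(x)<\infty,
\]
because $k\in\mathcal{G}$. Hence $G|k|\in\F$.

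This bound also shows that the double series $\sum_z\sum_y b(x,z)G(z,y)k(y)$ converges absolutely. By Fubini we may therefore write, for every $x$,
\[
m(x)\,\mathcal{L}(Gk)(x)=\sum_y k(y)\Big(\sum_z b(x,z)\big(G(x,y)-G(z,y)\big)+c(x)G(x,y)\Big)=\sum_y k(y)\,m(x)\,\mathcal{L}G_y(x).
\]
Since $\mathcal{L}G_y(x)=1_y(x)$, the right-hand side equals $m(x)k(x)$, so $\mathcal{L}u=k$. It follows that $\mathcal{L}u=k\in\mathcal{G}$ and $u=Gk=G\mathcal{L}u$, which also proves the final assertion of the proposition.

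The main obstacle is the justification of the interchange, namely that $G|k|\in\F$. This is not part of the hypotheses, since only $Gk\in\F$ is assumed, and it is handled by the superharmonicity of the $G_y$ as above.
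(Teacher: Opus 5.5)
Your proof is correct and complete. The paper itself does not prove this statement: it quotes it from \cite{HKPII} and uses it as a black box, so there is no in-paper argument to match.

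Your route is the natural self-contained one. It needs only three ingredients:
\begin{enumerate}
\item the defining property $\mathcal{L}G_y=1_y$;
\item the resulting pointwise superharmonicity bound $\sum_z b(x,z)G(z,y)\le(\deg(x)+c(x))G(x,y)$;
\item Tonelli/Fubini.
\end{enumerate}
One point deserves to be said explicitly. Splitting $\sum_z b(x,z)\bigl(G(x,y)-G(z,y)\bigr)$ into $\deg(x)G(x,y)-\sum_z b(x,z)G(z,y)$ uses $G_y\in\mathcal{F}$. This holds because $G_y\in\mathcal{D}_0\subseteq\mathcal{D}\subseteq\mathcal{F}$, where the last inclusion follows from Cauchy--Schwarz and $\deg(x)<\infty$.

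An alternative would be to split $k=k_+-k_-$ and invoke a known result that $Gf\in\mathcal{F}$ and $\mathcal{L}Gf=f$ for $f\ge 0$ with $Gf<\infty$, in the spirit of \cite[Theorem~6.26]{KLW}, which the paper cites elsewhere. Another would be to approximate $G$ by Green's functions of finite subsets. Compared with these, your argument avoids any approximation. It also works verbatim for non-locally finite graphs with a killing term, which is exactly the generality the paper needs.

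Your argument also gives slightly more than the statement claims: $G|k|\in\mathcal{F}$ for every $k\in\mathcal{G}$. Hence the requirement $u\in\mathcal{F}$ in the definition of $\mathcal{P}$ is automatic for $u=Gk$.
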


We will also subsequently make use of the following characterization of the space $\D_0$. 
	
	\begin{proposition}[Lemma~5 of \cite{HKPII}] \label{prop:charD_0}
		We have $u \in \D_0$ if and only if there is a sequence $(\varphi_n)$ in $C_c(X)$ converging pointwise to $u$ as $n \to \infty$ and satisfying
		\[
		\sup_{n\in\N} \mathcal{Q}(\varphi_n) < \infty.
		\]
	\end{proposition}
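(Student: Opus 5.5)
The plan is to prove both implications using the Hilbert space structure of $(\D,\|\cdot\|_o)$. The one analytic input is that point evaluations are continuous with respect to $\|\cdot\|_o$.

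\emph{Step 1: continuity of point evaluations.} Fix $x\in X$. By connectedness there is a path $o=y_1,\dots,y_{n+1}=x$ with $b(y_i,y_{i+1})>0$. For $f\in\D$, Cauchy--Schwarz gives
\[
|f(x)-f(o)|^2\le \Big(\sum_{i=1}^n \frac{1}{b(y_i,y_{i+1})}\Big)\sum_{i=1}^n b(y_i,y_{i+1})(f(y_i)-f(y_{i+1}))^2\le C_x\,\mathcal{Q}(f).
\]
Hence $|f(x)|\le C'_x\|f\|_o$. The killing term only adds a nonnegative contribution to $\mathcal{Q}$, so it does not affect this bound.

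\emph{Step 2: $(\D,\|\cdot\|_o)$ is a Hilbert space, so $\D_0$ is one too.} The bilinear form $\mathcal{Q}(f,g)+f(o)g(o)$ is an inner product on $\D$. It is definite because $\|f\|_o=0$ forces $f=0$ by Step~1. For completeness, take a Cauchy sequence $(f_n)$. By Step~1 it converges pointwise to some $f$. Applying Fatou's lemma to the series defining $\mathcal{Q}(f_n-f_m)$ and letting $m\to\infty$ gives $\mathcal{Q}(f_n-f)\le\liminf_m\mathcal{Q}(f_n-f_m)$, which tends to $0$ as $n\to\infty$. So $f\in\D$ and $f_n\to f$ in norm. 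Then $\D_0$, being a closed subspace, is a Hilbert space.

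\emph{Step 3: ``$\Rightarrow$''.} If $u\in\D_0$, pick $\varphi_n\in C_c(X)$ with $\|\varphi_n-u\|_o\to0$. By Step~1 this gives pointwise convergence, and $\mathcal{Q}(\varphi_n)\le(\|u\|_o+1)^2$ for large $n$, so $\sup_n\mathcal{Q}(\varphi_n)<\infty$ after discarding finitely many terms.

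\emph{Step 4: ``$\Leftarrow$''.} Let $\varphi_n\to u$ pointwise with $\sup_n\mathcal{Q}(\varphi_n)<\infty$. Since $\varphi_n(o)$ converges, it is bounded, so $(\varphi_n)$ is bounded in the Hilbert space $\D_0$. Some subsequence therefore converges weakly to some $v\in\D_0$. By Step~1 each point evaluation is a bounded linear functional on $\D_0$, so $\varphi_{n_k}(x)\to v(x)$ for every $x$. Hence $v=u$ and $u\in\D_0$. (Alternatively, Banach--Saks gives convex combinations of the $\varphi_{n_k}$, still in $C_c(X)$, converging in norm to $u$.) Fatou's lemma also directly shows $\mathcal{Q}(u)\le\liminf\mathcal{Q}(\varphi_n)$, consistent with $u\in\D$.

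The main obstacle is the weak-compactness step in Step~4. It relies on $\D_0$ being complete (Step~2) and on identifying the weak limit with the pointwise limit, and both of these rest on the pointwise estimate of Step~1.
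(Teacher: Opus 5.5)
Your proof is correct. It runs through continuity of point evaluations, obtained from connectedness and a Cauchy--Schwarz estimate along a path, then completeness of $(\D,\|\cdot\|_o)$, and finally weak compactness (or Banach--Saks) in the closed subspace $\D_0$ to identify the weak limit with the pointwise limit. The paper does not prove this statement itself but cites Lemma~5 of \cite{HKPII}, and your self-contained argument is the standard proof of this characterization.
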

	
	The next lemma is crucial for the proof of Theorem~\ref{thm:null_critical_weights_Hardy}.
		\begin{lemma}[Main technical lemma]
			\label{lem:weak_green_formula_on_P}
			Let $\lambda:\R \rightarrow \R$ be Lipschitz continuous and such that $\lambda(0) = 0$. 
			\begin{itemize}
				\item[(a)] If $u \in \D_0$, then  $\lambda(u) \in \D_0$. 
			\end{itemize}
			Suppose now that $\lambda$ is additionally bounded and monotonically increasing. 
			\begin{itemize}
				\item[(b)] If $u \in \P\cup \mathcal{D}_0$ and $\mathcal{L} u \in \ell^1(X,m)$, then $\lambda(u) \in \D_0$ and 				\begin{align*}
					0 \leq \mathcal{Q}(u, \lambda(u)) \leq \sum_{x \in X} m\mathcal{L} u  \lambda (u).
				\end{align*} 
				In particular, $\mathcal{Q}(u,\lambda(u))$ converges absolutely.
			\end{itemize}
		\end{lemma}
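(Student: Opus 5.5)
For part (a) I will use Proposition~\ref{prop:charD_0}. Take $u\in\D_0$ and $(\varphi_n)$ in $C_c(X)$ with $\varphi_n\to u$ pointwise and $\sup_n\mathcal{Q}(\varphi_n)<\infty$. Since $\lambda(0)=0$, each $\lambda(\varphi_n)$ lies in $C_c(X)$, and $\lambda(\varphi_n)\to\lambda(u)$ pointwise by continuity of $\lambda$. Let $L_\lambda$ be the Lipschitz constant of $\lambda$. Then
\[
|\lambda(a)-\lambda(b)|\le L_\lambda|a-b|, \qquad |\lambda(a)|\le L_\lambda|a|,
\]
so $\mathcal{Q}(\lambda(\varphi_n))\le L_\lambda^2\mathcal{Q}(\varphi_n)$. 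The energies therefore stay bounded, and Proposition~\ref{prop:charD_0} gives $\lambda(u)\in\D_0$.

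For part (b), assume $\lambda$ is in addition bounded and increasing, and first treat $u\in\P$ with $k:=\mathcal{L}u\in\ell^1(X,m)$. By Proposition~\ref{prop:characterisation_potentials}, $u=Gk$. Set $k_n:=k1_{K_n}$ for an exhaustion of $X$ by finite sets $K_n$, and put $u_n:=Gk_n=\sum_{y\in K_n}k(y)G_y$. These are finite combinations of Green's functions, so $u_n\in\D_0$ and $\mathcal{L}u_n=k_n$. Since $k\in\mathcal{G}$, dominated convergence gives $u_n\to u$ pointwise.

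The key identity is the Green formula on $\D_0$:
\[
\mathcal{Q}(G_y,\varphi)=m(y)\varphi(y)\quad\text{for all }\varphi\in\D_0.
\]
This holds for $\varphi\in C_c(X)$ by a direct computation, since $\mathcal{L}G_y=1_y/m$ in the normalization used here (equivalently $\mathcal{L}G_y=1_y$ with the measure convention). It extends to all of $\D_0$ by closure in $\|\cdot\|_o$, using that $\varphi\mapsto\varphi(y)$ is continuous in that norm. Applying this to $\varphi=\lambda(u_n)\in\D_0$, which lies in $\D_0$ by part (a), gives
\[
\mathcal{Q}(u_n,\lambda(u_n))=\sum_X m\,k_n\,\lambda(u_n).
\]
The right-hand side converges to $\sum_X m\,k\,\lambda(u)$ by dominated convergence, with majorant $\|\lambda\|_\infty m|k|\in\ell^1$. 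Monotonicity of $\lambda$ makes every summand $b(x,y)(u_n(x)-u_n(y))(\lambda(u_n)(x)-\lambda(u_n)(y))$ nonnegative, and the killing summands $c\,u_n\lambda(u_n)$ are nonnegative too, because $\lambda(0)=0$ and $\lambda$ is increasing. Fatou's lemma then yields
\[
0\le\mathcal{Q}(u,\lambda(u))\le\sum_X m\,\mathcal{L}u\,\lambda(u),
\]
where $\mathcal{Q}(u,\lambda(u))$ is understood as a series of nonnegative terms. It is finite, hence absolutely convergent.

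It remains to show $\lambda(u)\in\D_0$. The pointwise bound $(\lambda(a)-\lambda(b))^2\le L_\lambda(a-b)(\lambda(a)-\lambda(b))$ for increasing Lipschitz $\lambda$, together with $\lambda(a)^2\le L_\lambda\, a\lambda(a)$, gives
\[
\mathcal{Q}(\lambda(u_n))\le L_\lambda\,\mathcal{Q}(u_n,\lambda(u_n))\le L_\lambda\|\lambda\|_\infty\|k\|_1 .
\]
Approximating each $\lambda(u_n)\in\D_0$ by finitely supported functions and taking a diagonal sequence, Proposition~\ref{prop:charD_0} gives $\lambda(u)\in\D_0$.

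For $u\in\D_0$ with $\mathcal{L}u\in\ell^1$, one could invoke the remark that superharmonic $u\in\D_0$ are potentials, but no sign is assumed here. I would argue directly instead, which I expect to be the main obstacle. By part (a), $\lambda(u)\in\D_0$, and I want the weak Green formula $\mathcal{Q}(u,\varphi)=\sum_X m\,\mathcal{L}u\,\varphi$ for bounded $\varphi\in\D_0$. For $\varphi\in C_c(X)$ this is a Fubini argument, valid because $u\in\mathcal{F}$. To pass to general bounded $\varphi\in\D_0$, I would approximate $\varphi=\lambda(u)$ by $\lambda(\psi_n)$, where $\psi_n\in C_c(X)$ and $\psi_n\to u$ in $\|\cdot\|_o$. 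Then $\lambda(\psi_n)$ is uniformly bounded, has bounded energy, and converges pointwise to $\lambda(u)$. Passing to a weakly convergent subsequence in the Hilbert space $(\D_0,\|\cdot\|_o)$, the left-hand side converges, because weak limits agree with pointwise limits in $\D_0$. The right-hand side converges by dominated convergence, since $\mathcal{L}u\in\ell^1$ and the $\lambda(\psi_n)$ are uniformly bounded. Nonnegativity of $\mathcal{Q}(u,\lambda(u))$ again follows from monotonicity of $\lambda$, and in this case the inequality is in fact an equality.
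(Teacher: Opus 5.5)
Your proposal is correct and follows the paper's proof essentially step by step. Part (a) is the same truncation argument with Proposition~\ref{prop:charD_0}, and part (b) uses the same approximation $u_n=Gk_n$, $k_n=\mathcal{L}u\,1_{K_n}$, followed by the Green's formula, dominated convergence, Fatou's lemma and the energy bound $\mathcal{Q}(\lambda(u_n))\le L_\lambda\,\mathcal{Q}(u_n,\lambda(u_n))$. The only difference is that you derive by hand what the paper cites from \cite[Lemma~6]{HKPII} and \cite[Lemma~6.8]{KLW}: the Green's formulas (via $\mathcal{Q}(G_y,\varphi)=m(y)\varphi(y)$ on $\D_0$, and via weak compactness for $u\in\D_0$) and the diagonal step needed to apply Proposition~\ref{prop:charD_0} to $\lambda(u_n)\in\D_0$.
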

		
		\begin{proof}
			To see~(a), with Lipschitz continuity of $\lambda$ and the assumption $\lambda(0)=0$, note first that we clearly have $\mathcal{Q}(\lambda \circ v) \leq L^2 \mathcal{Q}(v)$ for all $v \in \D$, where $L$ is a Lipschitz constant for $\lambda$. For $u \in \D_0$, there exists a sequence $(\varphi_n)$ in $C_c(X)$ that converges pointwise and with respect to $\mathcal{Q}$ to $u$ as $n\to \infty$, \cite[Lemma 6.5]{KLW}. 
			We observe further that  the support of $\lambda \circ \varphi_n$ is included in the support of $\varphi_n$, since $\lambda(0)=0$. 
			Thus $\big( \lambda \circ \varphi_n \big)$ is a sequence in $C_c(X)$ that converges pointwise to $\lambda \circ u$ as $n \to \infty$. 
			We obtain 
			\[
			\sup_{n \in \N} \mathcal{Q}\big( \lambda \circ \varphi_n )   \leq L^2 \mathcal{Q}(\varphi_n)  < \infty.
			\]
			We infer $\lambda \circ u \in \D_0$ from    Proposition~\ref{prop:charD_0}.
			
			We turn to the proof of~(b). We first assume $u\in \mathcal{P}$. Let $(k_n)$ be a sequence in $C_c(X)$ that converges pointwise to $\mathcal{L} u$ and such that $\abs{k_n} \leq \abs{\mathcal{L} u}$ for all $n \in \N$. Then $u_n := Gk_n \in \D_0$ by \cite[Lemma~6]{HKPII}. 
			Since, $u_n\in D_0$ with $\mathcal{L} u_n = k_n \in C_c(X)\subseteq \ell^1(X,m)$ and
			$\lambda(u_n)\in \mathcal{D}_0\cap \ell^{\infty}(X)$, as $\lambda$ is bounded, we can invoke a Green's formula, see \cite[Lemma~6.8]{KLW}, to obtain
			\begin{align*}
				\mathcal{Q}(u_n, \lambda(u_n)) = \sum_{ X} m\mathcal{L} u_n  \lambda(u_n )   =  \sum_{  X}m k_n \lambda(u_n ) \leq \norm{\lambda}_{\infty} \norm{\mathcal{L} u}_{ 1}.
			\end{align*}
			Since $u$ is a potential, by the dominated convergence theorem, the sequence $(u_n)$ converges pointwise to $u$. Since $\lambda$ is continuous, this also implies that $(\lambda(u_n))$ converges pointwise to $\lambda(u)$. Furthermore, $\lambda$ is bounded, $\mathcal{L} u \in \ell^1(X,m)$ and $\abs{\mathcal{L} u_n} = \abs{k_n} \leq \abs{\mathcal{L} u}$ and so it follows from the dominated convergence theorem that
			\begin{align*}
				\lim_{n \to \infty} \sum_{  X} m\mathcal{L} u_n  \lambda(u_n ) = \sum_{x \in X}m \mathcal{L} u \lambda(u ).
			\end{align*}
			Lastly $\lambda$ is monotonically increasing and so it holds for every $x, y \in X$ that
			\begin{align*}
				&(u(x)-u(y))(\lambda(u(x)) - \lambda(u(y))) \geq 0, \\
				&(u_n(x)-u_n(y))(\lambda(u_n(x)) - \lambda(u_n(y))) \geq 0.
			\end{align*}
			Together with $\lambda(0) = 0$, the monotonicity of $\lambda$ shows that $\lambda(t)t \geq 0$ for all $t \in \R$.
			With Fatou's lemma, we see that
			\begin{multline*}
				0\leq \mathcal{Q}(u, \lambda(u))  \leq \liminf_{n \to \infty} \mathcal{Q}(u_n, \lambda(u_n)) \\= \liminf_{n \to \infty} \sum_{  X} m\mathcal{L} u_n  \lambda(u_n )  = \sum_{ X}m \mathcal{L} u \lambda(u ),
			\end{multline*}
			as claimed. It remains to be shown that $\lambda(u) \in \D_0$. To this end, note that by Lipschitz continuity of $\lambda$, $\lambda(0)=0$, and the fact that $\lambda$ preserves signs, we have  for all $n \in \N$, 
			\begin{align*}
				\mathcal{Q}(\lambda(u_n)) \leq C \, \mathcal{Q}(u_n, \lambda(u_n)) \leq C \norm{\lambda}_{\infty} \norm{\mathcal{L} u}_{1}.
			\end{align*}
			We conclude  $\lambda(u) \in \D_0$ from Proposition~\ref{prop:charD_0}.

			For $u \in \D_0$, the claim follows directly (and is indeed an equality) as the Green's formula, \cite[Lemma~6.8]{KLW} applies directly for $u\in \mathcal{D}_0$ with $\mathcal{L} u \in \ell^1(X,m)$ and $\lambda(u) \in \D_0\cap\ell^{\infty}(X)$ which is true by (a).
		\end{proof}

	\section{Null-critical Hardy (type) weights}\label{sec:null_critical_weights}

	We extend our scope of investigation to weights that might take negative values. On the one hand, this does not complicate the considerations substantially, and on the other hand, this situation naturally arises if one uses solutions from corresponding continuum models in the supersolution construction, see e.g. \cite{hake2025optimal}.
	
	Given a graph $(b,c)$ over $(X,m)$ we say that a function $w:X \to \R$ is a {\em Hardy type weight} if a {\em Hardy type inequality} holds for $w$, i.e.\@
	\begin{align*}
		\mathcal{Q}(\varphi) \, \geq \,  \sum_{  X}m w \varphi^2  , \qquad \varphi \in C_c(X).
	\end{align*}
	A Hardy type weight $w$ is said to be {\em critical} if every $\widetilde{w} \geq w$ with $\widetilde{w} \neq w$ is not a Hardy type weight. As for Hardy weights, $w$ is a critical Hardy type weight if and only there is a {\em ground state} for $w$, i.e.,\@ a strictly positive $v \in \mathcal{F}$ such that $\mathcal{L}v = wv$, cf.\@ \cite[Theorem~5.3]{KPP20} and \cite[Theorem~14]{HKPII}. A critical Hardy type weight $w$ is {\em positive critical} if $v \in \ell^2(X,|w|m)$, i.e.,\@ the Hardy type inequality admits a minimizer. A critical Hardy type weight $w$ that is not positive critical is called {\em null-critical}. Such weights $w$ are also {\em optimal near infinity}, i.e.,\@ for every finite $K \subseteq X$ and each $\varepsilon > 0$, there is some $\varphi \in C_c(X)$ supported in $X \setminus K$ such that 
	$$\mathcal{Q}(\varphi) <  \sum_{  X}m (w+\varepsilon|w|) \varphi^2  ,$$
	cf.\@ \cite[Proposition~15]{HKPII}. 
	Clearly, every Hardy weight is a Hardy type weight.
	
	\medskip

	We now formulate the abstract main results of this paper, providing  sufficient criteria for the existence of null-critical Hardy (type) weights. 
	
	
	\begin{theorem}
		\label{thm:main_optimality_theorem}
		Let $(b,c)$ be a connected graph over  $(X,m)$. 
		Let $u \in \mathcal{F}$, $u > 0$ and let $w = {\mathcal{L} u^{\frac{1}{2}}}/{u^{\frac{1}{2}}}$. If $\mathcal{L} u \in \ell^1(X,m)$ and if $u \in \D_0\cup \P$, then $w$ is a critical Hardy type weight. If additionally $$\sum_{ X} m\mathcal{L} u  \neq \sum_{  X} cu,$$ then $w$ is  null-critical.
	\end{theorem}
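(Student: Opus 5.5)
The plan is to pass to the ground state transform with respect to $v:=u^{1/2}$, which is strictly positive and satisfies $\mathcal{L}v=wv$ by the definition of $w$. The standard ground state representation gives, for $\psi$ with $v\psi\in C_c(X)$,
\begin{align*}
\mathcal{Q}(v\psi)-\sum_X m w (v\psi)^2=\frac12\sum_{x,y\in X} b(x,y)v(x)v(y)(\psi(x)-\psi(y))^2=:\mathcal{Q}^v(\psi).
\end{align*}
This identity already shows that $w$ is a Hardy type weight. The killing term is absorbed into $w$, so $\mathcal{Q}^v$ is the energy of the graph $b^v(x,y)=b(x,y)v(x)v(y)$ without killing term. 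Criticality of $w$ is equivalent to recurrence of $b^v$, which in turn is equivalent to $1\in\mathcal{D}_0(\mathcal{Q}^v)$. By Proposition~\ref{prop:charD_0} applied to $b^v$, it then suffices to find finitely supported $\psi_n\to 1$ pointwise with $\sup_n\mathcal{Q}^v(\psi_n)<\infty$.

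For the candidates I take $\lambda_\varepsilon(t)=\min(t_+,\varepsilon)$, which is Lipschitz, bounded, increasing and vanishes at $0$, and set $\psi_\varepsilon=\lambda_\varepsilon(u)/\varepsilon$. Then $\psi_\varepsilon\to1$ pointwise as $\varepsilon\to0$, since $u>0$.

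The key elementary inequality is that for $a,b>0$
\begin{align*}
\sqrt{ab}\,(\lambda_\varepsilon(a)-\lambda_\varepsilon(b))^2\le \varepsilon\,(a-b)(\lambda_\varepsilon(a)-\lambda_\varepsilon(b)).
\end{align*}
If $a,b\le\varepsilon$ this is immediate, and if both exceed $\varepsilon$ both sides vanish. The mixed case $a\le\varepsilon\le b$ reduces to $\sqrt{ab}\,(\varepsilon-a)\le\varepsilon(b-a)$, which I would verify by treating the left side as a function of $a$. Summing over edges gives
\begin{align*}
\mathcal{Q}^v(\psi_\varepsilon)\le \frac1\varepsilon\Big(\mathcal{Q}(u,\lambda_\varepsilon(u))-\sum_X c\,u\,\lambda_\varepsilon(u)\Big)\le \sum_X m\mathcal{L}u\,\frac{\lambda_\varepsilon(u)}{\varepsilon}-\sum_X c\,u\,\frac{\lambda_\varepsilon(u)}{\varepsilon},
\end{align*}
where the last step is Lemma~\ref{lem:weak_green_formula_on_P}(b), using $u\in\mathcal{D}_0\cup\mathcal{P}$ and $\mathcal{L}u\in\ell^1(X,m)$. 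Since $0\le\lambda_\varepsilon(u)/\varepsilon\le1$ and $\sum_X cu\le\sum_X m\mathcal{L}u$ (this follows from the same lemma, since $0\le\mathcal{Q}(u,\lambda_\varepsilon(u))-\sum_X cu\lambda_\varepsilon(u)$), dominated convergence gives the bound $A:=\sum_X m\mathcal{L}u-\sum_X cu<\infty$.

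The functions $\psi_\varepsilon$ are not finitely supported, so I still need them in $\mathcal{D}_0(\mathcal{Q}^v)$. Lemma~\ref{lem:weak_green_formula_on_P}(b) gives $\lambda_\varepsilon(u)\in\mathcal{D}_0$. I then transfer finitely supported approximants $\varphi_n\to\lambda_\varepsilon(u)$ to $\varphi_n/(\varepsilon v)$ and control their $\mathcal{Q}^v$-energy via the ground state representation. I expect this transfer, possibly after a diagonal argument, to go through, and together with the bound above it yields $1\in\mathcal{D}_0(\mathcal{Q}^v)$, hence criticality.

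For null-criticality, assume $A>0$ and suppose $w$ were positive critical. Then $v$ would be a minimizer, and in the recurrent form $\mathcal{Q}^v$ the constant $1$ would be approximable in the form norm with energy tending to $0$, in a space where the $\psi_\varepsilon$ converge strongly. I would try to show that $\mathcal{Q}^v(\psi_\varepsilon)\to A$ is actually an equality in the limit, not merely an upper bound. For this I would run Lemma~\ref{lem:weak_green_formula_on_P} along the approximating potentials $u_n=Gk_n$ for which Green's formula holds exactly, and then compare $\mathcal{Q}^v(\psi_\varepsilon)$ with $\mathcal{Q}^v(1-\psi_\varepsilon)$. Positive criticality would force $\mathcal{Q}^v(1-\psi_\varepsilon)\to0$, by lower semicontinuity and uniqueness of the ground state in $\ell^2(v^2wm)$, contradicting $A\neq0$. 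This last step, obtaining a matching lower bound for the energy and linking positive criticality to vanishing energy of the $\psi_\varepsilon$, is the main obstacle. The criticality part I expect to be routine once the elementary inequality is checked.
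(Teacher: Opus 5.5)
\textbf{Criticality.} Your reduction is sound and differs only mildly from the paper. Via the ground state transform, criticality of $w$ is recurrence of $b^v$, i.e.\ $1\in\mathcal{D}_0(\mathcal{Q}^v)$, and for that pointwise convergence with bounded energy suffices. Proposition~\ref{prop:crit} instead uses logarithmic cut-offs to get energies $\le C/\log T\to0$. Your elementary inequality is also correct: in the mixed case, $\sqrt{ab}\,(\varepsilon-a)\le b(\varepsilon-a)\le\varepsilon(b-a)$ since $b\ge\varepsilon$.

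The gap is the step you call routine, namely $\psi_\varepsilon\in\mathcal{D}_0(\mathcal{Q}^v)$. This is where the paper does its real work, and your transfer does not achieve it. The functions $\varphi_n/(\varepsilon v)$ converge to $\psi_\varepsilon/v$, not to $\psi_\varepsilon$, and the ground state identity then brings in $-\sum_X mw\varphi_n^2$ with $w$ of indefinite sign.

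More fundamentally, the edge weights $b(x,y)\sqrt{u(x)u(y)}$ of $\mathcal{Q}^v$ are unbounded where $u$ is large, while $\psi_\varepsilon\equiv1$ on all of $\{u\ge\varepsilon\}$. Any finitely supported approximant must drop to $0$ somewhere in that region, and $\mathcal{Q}$-control of the approximants gives no $\mathcal{Q}^v$-control there.

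The missing idea is a cut-off at the top as well. The paper uses $\lambda_T=\lambda_T^+-\lambda_T^-$, which vanishes on $\{u\ge T\}$, together with Lemma~\ref{lem:technical_inequality}. For $0\le\varphi\le\lambda_T(u)$, every edge along which $\varphi$ changes either has $v(x)v(y)\le\sqrt2\,T$ or jumps from $\{u\le T\}$ to $\{u\ge 2T\}$. The latter edges are paid for by $\mathcal{Q}(u,\lambda_{2T}^-(u))\le\log(2T)\|\mathcal{L}u\|_1$.

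Your linear scheme can be repaired the same way. Subtract, e.g., $\min((u-M)_+,M)/M$, whose $\mathcal{Q}^v$-energy is again at most $2\|\mathcal{L}u\|_1$, and prove the analogous comparison lemma. As written, though, the criticality proof is incomplete.

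\textbf{Null-criticality.} This part is not a proof, and the route you sketch cannot work as stated.
\begin{itemize}
\item Since $\mathcal{Q}^v$ has no killing term, $\mathcal{Q}^v(1-\psi_\varepsilon)=\mathcal{Q}^v(\psi_\varepsilon)$ identically, so that comparison gives nothing.
\item Nothing about positive criticality forces the energies of your particular sequence to vanish.
\item There is no matching lower bound $\mathcal{Q}^v(\psi_\varepsilon)\to A$. Your elementary inequality is lossy on edges deep inside $\{u<\varepsilon\}$, where $\sqrt{u(x)u(y)}\ll\varepsilon$. Heuristically, for $u=G_o$ on $\Z^3$ the limit is about $A/2$.
\end{itemize}
Indeed, the paper's normalized cut-offs $\lambda_T(u)/\log T\to1$ have $\mathcal{Q}^v$-energy at most $4\|\mathcal{L}u\|_1/\log T\to0$, whatever the value of $A$. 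So vanishing energy of approximants of $1$ is compatible with $A\neq0$ and cannot by itself produce a contradiction.

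The paper instead invokes \cite[Propositions~21 and~22]{HKPII}. If $w$ were positive critical, then necessarily $u^{1/2}\in\mathcal{D}_0$ and $\sum_X m\mathcal{L}u=\sum_X cu$, i.e.\ a Green's formula holds for $u$. Hence $\sum_X m\mathcal{L}u\neq\sum_X cu$ excludes positive criticality. This link between positive criticality and a Green's formula for $u$ is the idea your argument is missing.
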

	
	Before we give the proof to this theorem, we show how Theorem~\ref{thm:null_critical_weights_Hardy} and Theorem~\ref{thm:optGF} can be derived	given this theorem.

	\begin{proof}[Proof of Theorem~\ref{thm:null_critical_weights_Hardy}] By \cite[Theorem~1]{HKPIV}, the condition for null-criticality $G(c/m)(x) < 1$ is equivalent to $\sum_X m\mathcal{L}u \neq  \sum_X cu$ for every superharmonic  $u\in\P$. Thus,  the result  follows immediately from Theorem~\ref{thm:main_optimality_theorem}, together with the observation that $u^{1/2}$ is superharmonic if $u$ is, cf.\@ \cite[Corollary~2.3]{KPP18}.
	\end{proof}

	We can explicitly describe any potential in terms of the Green kernel. This yields the following.
	
	\begin{corollary} \label{cor:GF}
		\label{thm:optimal_weights_from_green_function}
	If $k \in \ell^1(X,m)$, then $k\in \mathcal{G}$. If furthermore $0\neq k \geq 0$, then $w =  {\mathcal{L} \left( Gk \right)^\frac{1}{2}}/{\left( Gk \right)^\frac{1}{2}}$ is a  critical Hardy weight. Moreover, $w$ is null-critical if
	\[
	\sum_X m k \neq \sum_{  X} c(Gk).
	\]
	In particular, $w$ is null-critical if $c=0$.
	\end{corollary}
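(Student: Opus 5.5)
The plan is to reduce Corollary~\ref{cor:GF} to Theorem~\ref{thm:main_optimality_theorem} with $u = Gk$. Three things have to be checked: $k \in \mathcal{G}$, $u$ is an admissible strictly positive potential with $\mathcal{L}u \in \ell^1(X,m)$, and the resulting weight is nonnegative. The nullity condition then follows because $\mathcal{L}u = k$.

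\emph{Step 1: $k\in\mathcal{G}$.} By symmetry of $b$ and the definition of $\mathcal{L}$, the operator $\mathcal{L}$ is symmetric with respect to $m$. Hence $G(x,y)m(x) = G(y,x)m(y)$ (the reversibility of the Green's function). Moreover, by a maximum principle argument, $y\mapsto G(x,y)/m(y)=G(y,x)/m(x)$ is bounded by $G(x,x)/m(x)$, since $G_x$ attains its maximum at $x$. I expect to use the standard inequality $G(y,x)\le G(x,x)$, which holds because $G_x$ is superharmonic and harmonic off $x$, so its maximum lies at $x$. Then
\[
\sum_y G(x,y)|k(y)| = \sum_y \frac{m(y)}{m(x)}G(y,x)|k(y)| \le \frac{G(x,x)}{m(x)}\,\|k\|_1 <\infty ,
\]
so $k\in\mathcal{G}$. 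Justifying the bound $G(y,x)\le G(x,x)$ cleanly in the possibly non-locally finite setting is the main technical point. My first attempt is to cite it from \cite{KLW}, for instance via the monotone exhaustion construction of $G$, or via the normalization $G_x/G(x,x)$ being the hitting probability of $x$.

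\emph{Step 2: admissibility.} Set $u = Gk$. Then $u\in\mathcal{P}$ by definition. By Proposition~\ref{prop:characterisation_potentials}, $\mathcal{L}u = k\in \ell^1(X,m)$ and $\mathcal{L}u \ge 0$, so $u$ is superharmonic. Strict positivity follows since $G>0$ everywhere (connectedness) and $0\ne k\ge0$. Theorem~\ref{thm:main_optimality_theorem} therefore yields that $w=\mathcal{L}u^{1/2}/u^{1/2}$ is a critical Hardy type weight.

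\emph{Step 3: $w\ge0$ and nullity.} Since $u$ is superharmonic, $u^{1/2}$ is superharmonic by \cite[Corollary~2.3]{KPP18}. Hence $w\ge 0$ and $w$ is a critical Hardy weight; criticality among Hardy weights follows from criticality among Hardy type weights. For null-criticality, the condition in Theorem~\ref{thm:main_optimality_theorem} reads $\sum_X m\mathcal{L}u \ne \sum_X cu$, that is, $\sum_X mk \ne \sum_X c\,(Gk)$, which is exactly the stated hypothesis. If $c=0$, the right-hand side vanishes while $\sum_X mk>0$ because $0\ne k\ge0$ and $m>0$. So the condition holds and $w$ is null-critical.
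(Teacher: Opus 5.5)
Your proposal is correct and follows the paper's proof essentially step by step. The paper also derives $k\in\mathcal{G}$ from the bound $G(x,y)\le \frac{m(y)}{m(x)}G(x,x)$, which it cites from \cite[Theorem~B.1]{KLSS} and \cite[Theorem~6.26~(a)]{KLW} rather than obtaining it via reversibility and the maximum principle as you sketch. It then uses Proposition~\ref{prop:characterisation_potentials} to get $\mathcal{L}Gk=k$, establishes strict positivity and superharmonicity of $u$ and $u^{1/2}$, and concludes with Theorem~\ref{thm:main_optimality_theorem} via $\sum_X mk=\sum_X m\mathcal{L}Gk$.
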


	\begin{proof}
			If $k \in \ell^1(X,m)$, then by $G(x,y)\leq m(y)/m(x)G(x,x)$ \cite[Theorem B.1]{KLSS} and \cite[Theorem~6.26~(a)]{KLW}, we obtain 
			$k\in \mathcal{G}$.			
		If $k \in \mathcal{G}$ then $u = Gk \in \P$ and $\mathcal{L} u = k$ by Proposition~\ref{prop:characterisation_potentials}. So, connectedness,  $\mathcal{L} u=k \in \ell^1(X,m)$ and $0\neq k \geq 0$  imply that $u > 0$ and superharmonicity of $u$, \cite[Theorem~6.26x]{KLW}. Moreover,  it is easy to see that the same holds for $u^{1/2}$, cf.~\cite[Corollary~2.3]{KPP18}. The  statements now follow from Theorem~\ref{thm:main_optimality_theorem}, while noting that 
		\[
		0 \neq \sum_{  X}m  k = \sum_{  X}m \mathcal{L}Gk.  \hfill\qedhere
		\]
	\end{proof}
	
We can now prove Theorem~\ref{thm:optGF}. 

\begin{proof}[Proof of Theorem~\ref{thm:optGF}] 
By \cite[Theorem~1 or Theorem~26]{HKPIV},  $G(c/m)=1$   is equivalent to stochastic completeness at infinity and $\sum_X m e^{-tL}1_x\to0$ as $t\to\infty$.  Hence, in all cases stated in the theorem, we have $G(c/m) < 1$, cf. \cite[Lemma~4]{HKPIV} for the dichotomy of $G(c/m)=1$ and $G(c/m)<1$. Now, the  result follows immediately from Theorem~\ref{thm:null_critical_weights_Hardy}.
\end{proof}
	
\begin{remark}[Optimality of the criteria] The theorem above does not characterize null-criticality of the Hardy weights. In  the case $c=0$ and $m=1$, the dissertation thesis \cite[Proposition~2.29]{hake2025optimal} of the first named author  gives some additional insight: Given a  superharmonic $u \in \mathcal{F}$, $u > 0$   such that $w = {\mathcal{L} u^{\frac{1}{2}}}/{u^{\frac{1}{2}}}$ is a critical Hardy weight, one has the following:
	\begin{enumerate}
		\item[(a)] If $u \in \D$ then $u \in \D_0$. 
		\item[(b)] If $u$ is bounded then $u$ is a potential.
	\end{enumerate}
\end{remark}

	The remainder of this section is devoted to the proof of the criteria on critical and null-critical Hardy (type) weights, especially Theorem~\ref{thm:main_optimality_theorem}.  
	We introduce the following families of functions. For all $T > 1$ we define the functions $\lambda_T^+, \lambda_T^-:\R \rightarrow \R$ by
	
	\begin{align*}
		\lambda_T^+(t) = \int_{\frac{1}{T}}^{1} \frac{1_{[0,t]}(s)}{s} ds, \qquad \lambda_T^-(t) = \int_1^{T} \frac{1_{[0,t]}(s)}{s} ds.
	\end{align*}
	
	\begin{lemma}
		\label{lem:properties_of_lambda_T}
		For every $T > 1$ the functions $\lambda_T^\pm$ are monotonically increasing and Lipschitz continuous. They satisfy $0 \leq \lambda_T^\pm \leq \log T$ and $\lambda_T^\pm(0) = 0$. Furthermore, for all $r, t > 0$,
		\begin{align*}
			t^\frac{1}{2} r^\frac{1}{2} \abs{\lambda_T^\pm(t) - \lambda_T^\pm(r)} \leq \abs{t-r}. 
		\end{align*}
	\end{lemma}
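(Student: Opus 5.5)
Everything follows from the explicit form of $\lambda_T^\pm$. Evaluating the integrals gives
\begin{align*}
\lambda_T^+(t)=\begin{cases}0, & t\le 1/T,\\ \log(tT), & 1/T\le t\le 1,\\ \log T, & t\ge 1,\end{cases}\qquad
\lambda_T^-(t)=\begin{cases}0, & t\le 1,\\ \log t, & 1\le t\le T,\\ \log T, & t\ge T.\end{cases}
\end{align*}
For $t\le 0$ the indicator $1_{[0,t]}(s)$ vanishes on the integration range, which lies in $(0,\infty)$, so both functions are $0$ there. Equivalently, $\lambda_T^\pm(t)=\int_{I^\pm}1_{[0,t]}(s)\,s^{-1}\,ds$ with $I^+=[1/T,1]$ and $I^-=[1,T]$. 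Monotonicity holds because the integrand is pointwise nondecreasing in $t$. The bounds $0\le\lambda_T^\pm\le\int_{I^\pm}s^{-1}\,ds=\log T$ and $\lambda_T^\pm(0)=0$ are immediate.

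For Lipschitz continuity, take $r<t$. Then
\[
\lambda_T^\pm(t)-\lambda_T^\pm(r)=\int_{I^\pm\cap(r,t]}\frac{ds}{s}\le \frac{t-r}{\min I^\pm}.
\]
So $T$ is a Lipschitz constant for $\lambda_T^+$ and $1$ is one for $\lambda_T^-$.

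The weighted estimate is the only step with content. Assume $0<r<t$ without loss of generality, by symmetry. Since the integrand is nonnegative, we can drop the restriction to $I^\pm$:
\[
0\le \lambda_T^\pm(t)-\lambda_T^\pm(r)\le \int_r^t\frac{ds}{s}=\log\frac{t}{r}.
\]
Set $x=\sqrt{t/r}>1$ and divide by $r$. The claim then reduces to the scalar inequality $x\cdot 2\log x\le x^2-1$, that is, $2\log x\le x-1/x$ for $x\ge1$. Let $h(x)=x-1/x-2\log x$. Then $h(1)=0$ and $h'(x)=1+1/x^2-2/x=(1-1/x)^2\ge 0$, so $h\ge0$ on $[1,\infty)$. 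This is the logarithmic–geometric mean inequality $\sqrt{tr}\,\log(t/r)\le t-r$. Multiplying back by $r$ gives $t^{1/2}r^{1/2}|\lambda_T^\pm(t)-\lambda_T^\pm(r)|\le |t-r|$. The case $r=t$ is trivial, which completes the plan.
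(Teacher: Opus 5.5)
Your proof is correct and essentially matches the paper's: the same difference formula $\int_{I^\pm\cap(r,t]}ds/s$ gives monotonicity and Lipschitz continuity, and the weighted estimate uses the same reduction to $\log(t/r)$ together with the same auxiliary function $x-x^{-1}-2\log x$, whose derivative is $(1-x^{-1})^2$. The only difference is that you write out both explicit formulas and treat $\lambda_T^-$ directly (Lipschitz constant $1$), where the paper just says the argument is ``similar''.
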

	
	\begin{proof}
		We only prove these statements for $\lambda_T^+$. The proof for $\lambda_T^-$ is similar.
			Let $r, t \in \R$, $t > r$ be arbitrary. Then
		\begin{align*}
			\lambda_T^+(t) - \lambda_T^+(r) &= \int_{\frac{1}{T}}^{1} \frac{1_{(r,t]}(s)}{s}  \, ds \in \left[ 0, T(t-r) \right],
		\end{align*}
		which implies that $\lambda_T^+$ is monotonically increasing and Lipschitz continuous with Lipschitz constant $T$. Clearly, $\lambda_T^+(t) = 0$ if $t \leq 0$ and $\lambda_T^+(t) = \log T$ if $t \geq 1$. Thus, $0 \leq \lambda_T^+ \leq \log T$ and $\lambda_T^+(0) = 0$. Next, assume additionally that $t> r > 0$. Then, we can see similarly as before that
		\begin{align*}
			\lambda_T^+(t) - \lambda_T^+(r) &= \int_{\frac{1}{T}}^{1} \frac{1_{(r,t]}(s)}{s} \, ds \leq \int_r^t \frac{1}{s} \, ds = \log \left( \frac{t}{r} \right).
		\end{align*}
		
	Consider the function $\lambda:(0,\infty) \rightarrow \R$, $\lambda(\xi) = \xi - \xi^{-1} - 2 \log(\xi)$. It satisfies 
		\begin{align*}
			\lambda'(\xi) = 1 + \xi^{-2} - 2\xi^{-1} = (1-\xi^{-1})^2 \geq 0
		\end{align*} for all $\xi \in (0, \infty)$
		and $\lambda(1) = 0$. Therefore, $\lambda(\xi) \geq 0$ if $\xi > 1$. If we choose $\xi = \frac{t}{r} > 1$, then we can calculate 
		\begin{align*}
			0 \leq \lambda \! \left( \xi^\frac{1}{2} \right)  
			= \frac{t^\frac{1}{2}}{r^\frac{1}{2}} - \frac{r^\frac{1}{2}}{t^\frac{1}{2}} - \log \left( \frac{t}{r} \right)
			\leq \frac{t^\frac{1}{2}}{r^\frac{1}{2}} - \frac{r^\frac{1}{2}}{t^\frac{1}{2}} - \left( \lambda_T^+(t) - \lambda_T^+(r) \right). 
		\end{align*}
		Hence,
		\begin{align*}
			t^\frac{1}{2} r^\frac{1}{2} ( \lambda_T^+(t) - \lambda_T^+(r) ) \leq t-r 
		\end{align*}
		and, as $\lambda_T^+$ is monotone increasing, we have, for all $r, t > 0$,
		\begin{equation*}
			t^\frac{1}{2} r^\frac{1}{2} \abs{\lambda_T^+(t) - \lambda_T^+(r)} \leq \abs{t-r}.\hfill\qedhere
		\end{equation*}
	\end{proof}
	
	For a strictly positive $v \in \mathcal{F}$ and $f \in C(X)$,
	 we define 
	\[
	\mathcal{Q}_v(f) = \frac{1}{2} \sum_{x,y \in X} b(x,y)v(x)v(y) \left( {f(x)} - {f(y)}  \right)^2,
	\] 
	which may be infinite. 
	
	\begin{lemma}
		\label{lem:F_T^pm_u_in_D_0}
		Let $u \in \D_0\cup  \P$, $u > 0$ and $\mathcal{L} u \in \ell^1(X,m)$. Let $v = u^{\frac{1}{2}}$ and let $T > 1$. Then $\lambda_T^\pm (u) \in \D_0$ and 
		\begin{align*}
			\mathcal{Q}_v(\lambda_T^\pm(u))  \leq \mathcal{Q}(u, \lambda_T^\pm(u)) \leq \log T \, \norm{\mathcal{L} u}_{1}.
		\end{align*} 
	\end{lemma}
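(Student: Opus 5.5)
The plan is to combine the main technical Lemma~\ref{lem:weak_green_formula_on_P} with the pointwise inequality of Lemma~\ref{lem:properties_of_lambda_T}.

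First, Lemma~\ref{lem:properties_of_lambda_T} says $\lambda_T^\pm$ is Lipschitz, bounded by $\log T$, monotonically increasing, and vanishes at $0$. So Lemma~\ref{lem:weak_green_formula_on_P}(b) applies to $u\in \D_0\cup\P$ with $\mathcal{L}u\in\ell^1(X,m)$. It gives $\lambda_T^\pm(u)\in\D_0$ and
\begin{align*}
0\le \mathcal{Q}(u,\lambda_T^\pm(u))\le \sum_X m\,\mathcal{L}u\,\lambda_T^\pm(u).
\end{align*}
For the right-hand side, note that $\mathcal{L}u$ need not be nonnegative here, so I will not drop signs. Instead I use $0\le\lambda_T^\pm(u)\le\log T$ to bound it by $\sum_X m|\mathcal{L}u|\log T=\log T\,\norm{\mathcal{L}u}_1$. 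This yields the second inequality.

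For the first inequality, I compare the two forms edge by edge. The form $\mathcal{Q}(u,\lambda_T^\pm(u))$ consists of the edge sum
\begin{align*}
\frac12\sum_{x,y}b(x,y)\,(u(x)-u(y))\,(\lambda_T^\pm(u(x))-\lambda_T^\pm(u(y)))
\end{align*}
plus the killing part $\sum_X c\,u\,\lambda_T^\pm(u)$.
\begin{itemize}
\item The killing part is nonnegative because $u>0$ and $\lambda_T^\pm\ge0$.
\item Each edge term is nonnegative by monotonicity.
\item Lemma~\ref{lem:properties_of_lambda_T} with $t=u(x)$ and $r=u(y)$ gives
\begin{align*}
v(x)v(y)\,\abs{\lambda_T^\pm(u(x))-\lambda_T^\pm(u(y))}\le \abs{u(x)-u(y)}.
\end{align*}
Multiplying by $\abs{\lambda_T^\pm(u(x))-\lambda_T^\pm(u(y))}$ bounds the $\mathcal{Q}_v$ edge term $v(x)v(y)(\lambda_T^\pm(u(x))-\lambda_T^\pm(u(y)))^2$ by $(u(x)-u(y))(\lambda_T^\pm(u(x))-\lambda_T^\pm(u(y)))$.
\end{itemize}
Summing against $b/2$ and adding the nonnegative killing part gives $\mathcal{Q}_v(\lambda_T^\pm(u))\le\mathcal{Q}(u,\lambda_T^\pm(u))$. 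All terms are nonnegative, so no convergence issues arise.

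I expect no real obstacle. The only point needing care is that $\mathcal{Q}(u,\lambda_T^\pm(u))$ is well defined even though $u$ may not lie in $\D$. This is covered by the absolute convergence statement in Lemma~\ref{lem:weak_green_formula_on_P}(b), together with the convention for $\mathcal{Q}$ on nonnegative-termed pairs.
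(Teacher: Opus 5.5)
Your proposal is correct and follows the paper's proof essentially step for step. Membership in $\D_0$ and the bound $\mathcal{Q}(u,\lambda_T^\pm(u))\le \sum_X m\,\mathcal{L}u\,\lambda_T^\pm(u)\le \log T\,\norm{\mathcal{L}u}_1$ come from Lemma~\ref{lem:weak_green_formula_on_P}(b) and $0\le\lambda_T^\pm\le\log T$, and the first inequality comes from the edgewise estimate of Lemma~\ref{lem:properties_of_lambda_T}. Your observation that the killing part $\sum_X c\,u\,\lambda_T^\pm(u)\ge 0$ is what makes the edge sum a lower bound for $\mathcal{Q}(u,\lambda_T^\pm(u))$ is slightly more careful than the paper, which writes this step as an equality; that equality holds only when $c=0$.
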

	
	\begin{proof} The fact $\lambda_T^\pm (u) \in \D_0$ follows from Lemma~\ref{lem:weak_green_formula_on_P}. Furthermore, it follows from the previous lemma that for all $x, y \in X$ 
		\begin{align*}
			v(x)v(y) (\lambda_T^\pm(u(x))-\lambda_T^\pm(u(y)))^2 &= u^\frac{1}{2}(x)u^\frac{1}{2}(y) \abs{\lambda_T^\pm(u(x))-\lambda_T^\pm(u(y))}^2 \\
			&\leq \abs{u(x)-u(y)} \abs{\lambda_T^\pm(u(x))-\lambda_T^\pm(u(y))} \\
			&= (u(x)-u(y)) (\lambda_T^\pm(u(x))-\lambda_T^\pm(u(y))).
		\end{align*}
		The last equality holds because $\lambda_T^\pm$ is monotonically increasing. Thus,
		\begin{align*}
			\mathcal{Q}_v(\lambda_T^\pm(u))
			&\leq  \frac{1}{2} \sum_{x, y \in X} b(x,y) (u(x)-u(y)) (\lambda_T^\pm(u(x))-\lambda_T^\pm(u(y)))\\
			&= \mathcal{Q}\big( u,\, \lambda_T^{\pm}(u) \big)\le \sum_{  X}  m{ \mathcal{L} u  \lambda_T^\pm(u ) }  \leq \log T \, \norm{\mathcal{L} u}_{1} ,
		\end{align*}
		by Lemma~\ref{lem:weak_green_formula_on_P} and Lemma~\ref{lem:properties_of_lambda_T}.
		This finishes the proof.
	\end{proof}
	
	Next, we  define $$\lambda_T := \lambda_T^+ - \lambda_T^-$$ for  $T > 1$. Then $$0 \leq \lambda_T \leq \log T$$ since $\lambda_T^+$ and $\lambda_T^-$ are monotonically increasing with $0 \leq \lambda_T^\pm \leq \log T$ and $\lambda_T^+(1) = \log T$, $\lambda_T^-(1) = 0$.

	\begin{lemma}
		\label{lem:technical_inequality}
		Let $u \in \F$, $u > 0$, $v = u^\frac{1}{2}$ and $T > 1$. If $\varphi \in C_c(X)$ is such that $0 \leq \varphi \leq \lambda_T(u)$, then
		\begin{align*}
			\mathcal{Q}_v(\varphi)  \leq \sqrt{2}T \, \mathcal{Q}(\varphi) + \frac{2(\log T)^2}{\log 2}   \mathcal{Q}(u, \lambda_{2T}^-(u))   .
		\end{align*}
	\end{lemma}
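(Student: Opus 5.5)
The plan is to split the sum defining $\mathcal{Q}_v(\varphi)$ over edges according to the size of $u$ at the endpoints. First I will record where $\lambda_T(u)$ can be nonzero. From the definitions,
\[
\lambda_T(t)=0 \text{ for } t\le 1/T, \qquad \lambda_T(t)=\log(tT) \text{ on } [1/T,1], \qquad \lambda_T(t)=\log T-\log t \text{ on } [1,T], \qquad \lambda_T(t)=0 \text{ for } t\ge T.
\]
So $0\le \varphi\le\lambda_T(u)$ forces $\varphi(x)\le \log T$ everywhere and $\varphi(x)=0$ unless $1/T<u(x)<T$. All terms in $\mathcal{Q}_v(\varphi)$ are nonnegative, and each unordered edge $\{x,y\}$ contributes $b(x,y)v(x)v(y)(\varphi(x)-\varphi(y))^2$. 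Edges with $\varphi(x)=\varphi(y)=0$ contribute nothing, so we may assume $\varphi(x)>0$, hence $u(x)<T$.

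\emph{Case 1: $u(y)\le 2T$.} Then $v(x)v(y)=\sqrt{u(x)u(y)}\le\sqrt{2T^2}=\sqrt2\,T$. So these edges contribute at most $\sqrt2 T\cdot\frac12\sum b(x,y)(\varphi(x)-\varphi(y))^2$. This is at most $\sqrt2 T\,\mathcal{Q}(\varphi)$, since the killing term in $\mathcal{Q}$ is nonnegative.

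\emph{Case 2: $u(y)>2T$.} Then $\varphi(y)=0$, so the contribution is $b(x,y)v(x)v(y)\varphi(x)^2\le b(x,y)v(x)v(y)(\log T)^2$. I compare this with the corresponding edge term of $\mathcal{Q}(u,\lambda_{2T}^-(u))$ in two steps.
\begin{itemize}
\item Since $u(y)>2T$, we have $\lambda_{2T}^-(u(y))=\log(2T)$. Since $u(x)<T$, we have $\lambda_{2T}^-(u(x))\le\log T$. Hence the difference of the $\lambda_{2T}^-$ values is at least $\log 2$.
\item Using $u(x)<T<u(y)/2$, we get $v(x)v(y)\le\sqrt{T u(y)}\le u(y)/\sqrt2$ and $u(y)-u(x)\ge u(y)/2$. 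Therefore $v(x)v(y)\le\sqrt2\,(u(y)-u(x))$.
\end{itemize}
Combining the two,
\[
v(x)v(y)(\log T)^2\le \frac{\sqrt2(\log T)^2}{\log 2}\,(u(y)-u(x))\bigl(\lambda_{2T}^-(u(y))-\lambda_{2T}^-(u(x))\bigr).
\]

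To sum the Case 2 bounds, I use that every edge term of $\mathcal{Q}(u,\lambda_{2T}^-(u))$ is nonnegative by monotonicity of $\lambda_{2T}^-$ (Lemma~\ref{lem:properties_of_lambda_T}), and that the killing term $\sum c\,u\,\lambda_{2T}^-(u)$ is nonnegative. Summing over the Case 2 edges then gives at most $\frac{\sqrt2(\log T)^2}{\log2}\,\mathcal{Q}(u,\lambda_{2T}^-(u))$. Each unordered edge appears with the same multiplicity in $\mathcal{Q}_v$ and in $\mathcal{Q}(u,\cdot)$, so no extra factor arises, and $\sqrt2\le2$ yields the stated constant.

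The main point requiring care is Case 2, where $v(x)v(y)$ is unbounded. The choice of the threshold $2T$ is what gives a uniform gap $\log 2$ in $\lambda_{2T}^-$ and makes $u(y)-u(x)$ comparable to $u(y)$. The remaining care is in the bookkeeping of edge multiplicities, and in the fact that $\mathcal{Q}(u,\lambda_{2T}^-(u))$ may be infinite but is a sum of nonnegative terms, so the comparison is valid regardless.
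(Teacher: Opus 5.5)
Your proof is correct and follows essentially the paper's argument. Each edge term is handled by a dichotomy: either $v(x)v(y)\le\sqrt2\,T$, or one endpoint has $u$ above $2T$ and the other below $T$, in which case the $\log 2$ gap of $\lambda_{2T}^-$ together with $v(x)v(y)\le\sqrt2\,|u(x)-u(y)|$ bounds it by the corresponding nonnegative term of $\mathcal{Q}(u,\lambda_{2T}^-(u))$. The only differences are cosmetic: you organize the cases by the support of $\varphi$ rather than by assuming $u(x)\ge u(y)$, and you make explicit that the argument gives the constant $\sqrt2(\log T)^2/\log 2$, which is then relaxed to the stated $2(\log T)^2/\log 2$.
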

	
	\begin{proof}
		We claim that  for all $x, y \in X$ that either the first inequality
		\begin{align*}
			v(x)v(y) \leq \sqrt{2}T 
		\end{align*}
		or the second inequality
		\begin{align*}
		 v(x)v(y) ( \varphi(x) - \varphi(y) )^2 \leq 	\frac{\sqrt{2}(\log T)^2}{\log 2} (u(x)-u(y))( \lambda_{2T}^-(u(x))-\lambda_{2T}^-(u(y)))
		\end{align*}
		holds.
	Given this claim then the statement of the lemma follows because  
		\begin{multline*}
			\mathcal{Q}_v(\varphi)  
			\leq \frac{1}{2} \sum_{x, y \in X} b(x,y) \Big[ \sqrt{2}T ( \varphi(x) - \varphi(y) )^2  \\ + \frac{\sqrt{2}(\log T)^2}{\log 2} (u(x)-u(y))(\lambda_{2T}^-(u(x))-\lambda_{2T}^-(u(y))) \Big] \\
			 \leq \sqrt{2}T \, \mathcal{Q}(\varphi) + \frac{\sqrt{2}(\log T)^2}{\log 2} \mathcal{Q}(u, \lambda_{2T}^-(u)).
		\end{multline*}
		Note that we used non-negativity of $u$ and $\lambda^{-}_{2T}$ in the last step.

		To prove the claim, fix $x, y \in X$. Since both inequalities in the claim are symmetric in $x, y$ we can assume without loss of generality that $u(x) \geq u(y)$. We consider three cases.
		
		First, if $u(y) > T$, then  $\lambda_T^+(u(y)) = \log T=\lambda_T^-(u(y)) $ and so $\lambda_T(u(y)) = 0$. This implies $\varphi(y) = 0$ because $0 \leq \varphi \leq \lambda_T(u)$. Since $u(x) \geq u(y)$, it similarly follows that $\varphi(x) = 0$. Therefore, 
		$			( \varphi(x) - \varphi(y) )^2 = 0$ .
		On the other hand, 
		\begin{align*}
			(u(x)-u(y))(\lambda_{2T}^-(u(x))-\lambda_{2T}^-(u(y))) \geq 0
		\end{align*}
		is always true because $\lambda_{2T}^-$ is  increasing. This gives the second inequality.
		
		Secondly, we consider  $u(x) \geq 2T\ge T\ge u(y)$. Then it follows from the definition and the monotonicity of $\lambda_{2T}^-$ that
		\begin{align*}
			 \lambda_{2T}^-(u(y)) \leq \lambda_{2T}^-(T) = \log T, \qquad
			 \lambda_{2T}^-(u(x)) \geq \lambda_{2T}^-(2T) = \log (2T)
		\end{align*}
		and so $$\lambda_{2T}^-(u(x)) - \lambda_{2T}^-(u(y)) \geq \log 2.$$ 
		We  also see that
		\begin{align*}
			u(x) - u(y) \geq \frac{1}{2} u(x) \geq \frac{1}{\sqrt{2}} v(x) v(y).
		\end{align*}
	 Since $0 \leq \varphi \leq \lambda_T(u) \leq \log T$ by Lemma~\ref{lem:properties_of_lambda_T}, it always holds that
		\begin{align*}
			(\varphi(x) - \varphi(y))^2 \leq (\log T)^2.
		\end{align*}
		Putting those inequalities together, we get the second inequality		\begin{align*}
			v(x)v(y) ( \varphi(x) - \varphi(y) )^2 \leq \frac{\sqrt{2}(\log T)^2} {\log 2}(u(x)-u(y))(\lambda_{2T}^-(u(x))-\lambda_{2T}^-(u(y)))
		\end{align*}
		whenever $u(x) \geq 2T\ge T\ge u(y)$. 
		
	The remaining case is $u(y) \leq T$, $u(x) < 2T$ for which  we can estimate
		\begin{equation*}
			v(x)v(y) \leq \sqrt{T} \sqrt{2T} = \sqrt{2}T.\hfill\qedhere
		\end{equation*}
	\end{proof}

	\begin{lemma}
		\label{lem:F_T_u_in_D_0(h_v)}
		Let $u \in \D_0\cup \P$, $u > 0$ and $\mathcal{L} u \in \ell^1(X,m)$, let $v = u^\frac{1}{2}$ and $T > 1$. Then $\lambda_T(u) \in \D_0(\mathcal{Q}_v)$ and $$\mathcal{Q}_v(\lambda_T(u))  \leq 4 \log T \, \norm{\mathcal{L} u}_{1}.$$
	\end{lemma}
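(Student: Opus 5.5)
Write $\lambda_T=\lambda_T^+-\lambda_T^-$ and prove the energy bound first, then membership in $\D_0(\mathcal{Q}_v)$. Here $\D_0(\mathcal{Q}_v)$ is understood as for $\mathcal{Q}$: the closure of $C_c(X)$ with respect to $\mathcal{Q}_v$ plus evaluation at $o$. Equivalently, by the analogue of Proposition~\ref{prop:charD_0} for the graph with edge weights $b(x,y)v(x)v(y)$, it is the set of pointwise limits of sequences in $C_c(X)$ with bounded $\mathcal{Q}_v$-energy.

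\textbf{Energy bound.} Use $(a-b)^2\le 2a^2+2b^2$ edgewise:
\[
\mathcal{Q}_v(\lambda_T(u))\le 2\mathcal{Q}_v(\lambda_T^+(u))+2\mathcal{Q}_v(\lambda_T^-(u)).
\]
By Lemma~\ref{lem:F_T^pm_u_in_D_0} each term on the right is at most $\log T\,\norm{\mathcal{L}u}_1$. This gives $\mathcal{Q}_v(\lambda_T(u))\le 4\log T\,\norm{\mathcal{L}u}_1$.

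\textbf{Approximation.} Lemma~\ref{lem:F_T^pm_u_in_D_0} also gives $\lambda_T^\pm(u)\in\D_0$, hence $\lambda_T(u)\in\D_0$. We need approximants with controlled $\mathcal{Q}_v$-energy, and Lemma~\ref{lem:technical_inequality} is tailored to this. Take $\psi_n\in C_c(X)$ converging pointwise and in $\mathcal{Q}$ to $\lambda_T(u)$, which exists by \cite[Lemma~6.5]{KLW}. Truncate by setting
\[
\varphi_n=(\psi_n\vee 0)\wedge\lambda_T(u).
\]
Then $\varphi_n\in C_c(X)$, $0\le\varphi_n\le\lambda_T(u)$, and $\varphi_n\to\lambda_T(u)$ pointwise.

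Apply Lemma~\ref{lem:technical_inequality} to get
\[
\mathcal{Q}_v(\varphi_n)\le \sqrt2T\,\mathcal{Q}(\varphi_n)+\tfrac{2(\log T)^2}{\log 2}\,\mathcal{Q}(u,\lambda_{2T}^-(u)).
\]
The second term is bounded by $\log(2T)\norm{\mathcal{L}u}_1$ via Lemma~\ref{lem:F_T^pm_u_in_D_0}. So it remains to show that $\sup_n\mathcal{Q}(\varphi_n)<\infty$. Once that holds, $\mathcal{Q}_v(\varphi_n)$ is uniformly bounded and the Proposition~\ref{prop:charD_0}-type characterization gives $\lambda_T(u)\in\D_0(\mathcal{Q}_v)$.

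\textbf{Main obstacle: bounding $\mathcal{Q}(\varphi_n)$.} Positive parts and minima are normal contractions in the pair sense. Edgewise,
\[
|(a\vee0)\wedge c-(a'\vee0)\wedge c'|\le |a-a'|+|c-c'|,
\]
so
\[
\mathcal{Q}(\varphi_n)\le 2\mathcal{Q}(\psi_n)+2\mathcal{Q}(\lambda_T(u))+\text{killing terms}.
\]
The killing terms are bounded by $\sum c\,\psi_n^2$ because $0\le\varphi_n\le|\psi_n|$. Everything on the right is bounded, since $\lambda_T(u)\in\D$ and $\mathcal{Q}(\psi_n)$ converges. If the $\mathcal{Q}_v$-closure should instead be meant in norm rather than via bounded energy, I would add a Banach–Saks/Cesàro argument on the bounded sequence $(\varphi_n)$ in the Hilbert space associated to $\mathcal{Q}_v$. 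The weak limit is identified with $\lambda_T(u)$ by pointwise convergence.
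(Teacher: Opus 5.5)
Your proof is correct and follows the paper's argument. The energy bound comes from the split $(a-b)^2\le 2a^2+2b^2$ together with Lemma~\ref{lem:F_T^pm_u_in_D_0}, and membership in $\D_0(\mathcal{Q}_v)$ comes from applying Lemma~\ref{lem:technical_inequality} to approximants $0\le\varphi_n\le\lambda_T(u)$ and then Proposition~\ref{prop:charD_0}. The only difference is in how you get the approximants: the paper quotes \cite[Lemma~6.6]{KLW} for approximants converging in $\mathcal{Q}$, whereas you build them by the truncation $(\psi_n\vee 0)\wedge\lambda_T(u)$ and verify only the uniform energy bound, which suffices. You also make explicit the finiteness $\mathcal{Q}(u,\lambda_{2T}^-(u))\le\log(2T)\norm{\mathcal{L}u}_1$, which the paper leaves implicit.
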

	
	\begin{proof}
		By Lemma~\ref{lem:F_T^pm_u_in_D_0} the functions $\lambda_T^+(u)$ and $\lambda_T^-(u)$ are in $\D_0$ and, therefore, $\lambda_T(u) = \lambda_T^+(u) - \lambda_T^-(u) \in \D_0$. Let $(\varphi_n)$ be a sequence in $C_c(X)$ that converges pointwise and with respect to $\mathcal{Q}$ to $\lambda_T(u)$ and such that $0 \leq \varphi_n \leq \lambda_T(u)$ for all $n \in \N$ which exists by \cite[Lemma 6.6]{KLW}. 
		
		By Lemma~\ref{lem:technical_inequality} we have for all $n \in \N$ that
		\begin{align*}
			\mathcal{Q}_v(\varphi_n)  \leq \sqrt{2}T \, \mathcal{Q}(\varphi_n) + \frac{2(\log T)^2}{\log 2} \mathcal{Q}(u, \lambda_{2T}^-(u)).
		\end{align*}
We have $\mathcal{Q}(\varphi_n) \to \mathcal{Q}(\lambda_T(u))$ and so $\sup_n \mathcal{Q}_v(\varphi_n) < \infty$ which implies  $\lambda_T(u) \in \D_0(\mathcal{Q}_v)$  by Propostion~\ref{prop:charD_0}. 

Finally, it follows with Lemma~\ref{lem:F_T^pm_u_in_D_0} that
		\begin{equation*}
			Q_v(\lambda_T(u)) 
			\leq 2 \mathcal{Q}_v\big( \lambda_T^+(u) \big) \,+\, 2 \mathcal{Q}_v\big( \lambda_T^-(u) \big) 
			\leq  4 \log T \norm{\mathcal{L} u}_{1}.\hfill\qedhere
		\end{equation*}
	\end{proof}

	The lemmas above  yield the main technical tool for the proof of Theorem~\ref{thm:main_optimality_theorem}.

	\begin{proposition}
		\label{prop:crit} Let $u \in \F$, $v = u^\frac{1}{2}>0$   and  $\lambda_T(u) \in \D_0(\mathcal{Q}_v)$ for all $T > 1$. If there is a constant  $C >0$ such that 
		\begin{align*}
			\mathcal{Q}_{v}\big( \lambda_T(u) \big) \leq C \log T
 		\end{align*}
 	for all $T > 1$, then  $w = {\mathcal{L} u^{\frac{1}{2}}}/{u^{\frac{1}{2}}}$ is a critical Hardy-type weight.
	\end{proposition}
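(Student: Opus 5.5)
We will prove criticality by exhibiting a null-sequence for the Hardy type form and then invoking the ground state transform. The form in question is
\[
h(\varphi)=\mathcal{Q}(\varphi)-\sum_X m w\varphi^2 .
\]
Since $v=u^{1/2}>0$ and $\mathcal{L}v=wv$, the ground state representation (cf.\ \cite{KPP18}, \cite[Theorem~5.3]{KPP20}) gives, for all $\varphi\in C_c(X)$,
\[
h(v\varphi)=\mathcal{Q}_v(\varphi).
\]
This identity shows that $w$ is a Hardy type weight. It also shows that $w$ is critical exactly when the form $\mathcal{Q}_v$ on $C_c(X)$ is recurrent. By the usual characterisation of recurrence for the weighted graph $b_v(x,y)=b(x,y)v(x)v(y)$ (without killing term), it suffices to show $1\in\D_0(\mathcal{Q}_v)$ with $\mathcal{Q}_v(1)=0$. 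Equivalently, we need a sequence $(\psi_n)$ in $C_c(X)$ with $\psi_n\to 1$ pointwise and $\mathcal{Q}_v(\psi_n)\to 0$.

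We take $\psi_T:=\lambda_T(u)/\log T$ and $\lambda_T$ itself as the building block; no logarithmic cut-off beyond this should be needed. Pointwise, for fixed $x$ and $T>\max\{u(x),1/u(x)\}$, we have $\lambda_T^+(u(x))=\log T-\log^-$-type terms. Explicitly, $\lambda_T(u(x))=\log T-|\log u(x)|$, so $\psi_T(x)\to 1$ as $T\to\infty$. For the energy, the hypothesis gives
\[
\mathcal{Q}_v(\psi_T)=\frac{\mathcal{Q}_v(\lambda_T(u))}{(\log T)^2}\le \frac{C}{\log T}\to 0 .
\]
By the assumption $\lambda_T(u)\in\D_0(\mathcal{Q}_v)$, each $\psi_T$ lies in $\D_0(\mathcal{Q}_v)$. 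So for each $T$ we choose $\varphi_T\in C_c(X)$ with $\mathcal{Q}_v(\varphi_T-\psi_T)<1/\log T$ and $|\varphi_T(o)-\psi_T(o)|<1/\log T$. Then $\varphi_T(o)\to 1$ and $\mathcal{Q}_v(\varphi_T)\to 0$. Either by a diagonal argument along $T=n$, or because convergence in $\|\cdot\|_o$ implies pointwise convergence on connected graphs, $1$ lies in $\D_0(\mathcal{Q}_v)$ and has zero energy. Hence $\mathcal{Q}_v$ is recurrent.

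Next we deduce criticality. Suppose $\widetilde w\ge w$ is a Hardy type weight. Then for the $\varphi_n$ above,
\[
\sum_X m(\widetilde w-w)v^2\varphi_n^2\le h(v\varphi_n)=\mathcal{Q}_v(\varphi_n)\to 0 .
\]
By Fatou and $\varphi_n\to1$ pointwise, $\sum_X m(\widetilde w-w)v^2=0$, so $\widetilde w=w$. This argument is self-contained and avoids quoting the recurrence theory. The main point needing care is the ground state representation $h(v\varphi)=\mathcal{Q}_v(\varphi)$ in the non-locally finite setting. It requires $v\in\F$ and absolute convergence of $\sum_y b(x,y)v(y)$, both of which hold since $v\in\F$ and $\varphi$ has finite support. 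The pointwise limit of $\lambda_T(u)$, needed above, also has to be checked directly from the definitions of $\lambda_T^\pm$.
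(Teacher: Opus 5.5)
Your proof is correct and is essentially the paper's argument. Both use the ground state transform $(\mathcal{Q}-w)(v\varphi)=\mathcal{Q}_v(\varphi)$ with finitely supported approximants of $\lambda_T(u)$, and both play the bound $\mathcal{Q}_v(\lambda_T(u))\le C\log T$ against $\lambda_T(u(x))=\log T-|\log u(x)|$. The only difference is bookkeeping: you normalize by $\log T$, extract a single null-sequence $\varphi_T\to 1$ (using connectedness to get pointwise convergence) and apply Fatou. The paper instead fixes $T$, lets the approximants converge to $\lambda_T(u)$ to get $m(\widetilde w-w)v^2\lambda_T(u)^2(x)\le C\log T$ at each vertex, and only then sends $T\to\infty$, which avoids the diagonal step.
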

	\begin{proof}
		Since $\lambda_T(u) \in \D_0(\mathcal{Q}_v)$,  there exists a sequence $(\varphi_n)$ in $C_c(X)$ that converges to $\lambda_T(u)$ pointwise and with respect to $\mathcal{Q}_v$. We can calculate 
		\begin{align*}
			m(\tilde{w} - w) (v\varphi_n)^2 (x)\leq \sum_X m(\tilde{w} - w)(v\varphi_n)^2  = (\mathcal{Q}-w)(v\varphi_n) - (\mathcal{Q}-\tilde{w})(v\varphi_n)
		\end{align*}
		for any $n \in \N$.
		If $\tilde{w}\ge w$ is a Hardy-type weight it follows, then  $(\mathcal{Q}-\tilde{w})(v\varphi_n) \geq 0$ and by the ground state transform, see e.g.\@ \cite[Proposition~4.8]{KPP20}, it holds that $(\mathcal{Q}-w)(v\varphi_n) = \mathcal{Q}_v(\varphi_n)$. Therefore,
		\begin{align*}
		m(\tilde{w} - w) (v\varphi_n)^2 (x) \leq \mathcal{Q}_v(\varphi_n).
		\end{align*}
		As $n$ tends to infinity the left hand side converges to $m(\tilde{w} - w) (v\lambda_T(u))^2 (x)$ and the right hand side to $\mathcal{Q}_v(\lambda_T(u))$. 	
		Thus, by assumption we conclude, for all $T > 1$,
		\begin{align*}
			m(\tilde{w} - w) (v\lambda_T(u))^2 (x) \leq \mathcal{Q}_v(\lambda_T(u)) \leq C\log T  .
		\end{align*}
				Since $T > 1$ was arbitrary, we may assume that $T^{-1} < u(x) < T$. If $u(x) \leq 1$ then $\lambda_T^+(u(x)) = \log(u(x)) + \log T$ and $\lambda_T^-(u(x)) = 0$. And if $u(x) > 1$ then $\lambda_T^+(u(x)) = \log T$ and $\lambda_T^-(u(x)) = \log(u(x))$. In either case $$\lambda_T(u(x)) = \log T - \abs{\log(u(x))}.$$ 
		It follows as $v^2 = u$ that
		\begin{align*}
			0\leq (\tilde{w} - w)(x) \leq \frac{C}{u(x)m(x)} \frac{\log T}{(\log T - \abs{\log(u(x))})^2}.
		\end{align*}
		Now, the right hand side tends to $0$ as $T \to \infty$ and so we must have $(\tilde{w} - w)(x) = 0$. And since $x$ was arbitrary it follows that $\tilde{w} = w$ and $w$ is critical.
	\end{proof}
	
	We are now in position to prove the main result of this section.
	
	\begin{proof}[Proof of Theorem~\ref{thm:main_optimality_theorem}]
		Let $u \in \D_0\cup \P$, $u > 0$  such that $\mathcal{L} u \in \ell^1(X,m)$.  By Lemma~\ref{lem:F_T_u_in_D_0(h_v)} we have $\lambda_T(u) \in \D_0(\mathcal{Q}_v)$, as well as
		\[
		\mathcal{Q}_v(\lambda_T(u)) \leq 4\log T \|\mathcal{L}u\|_1
		\] 
		for all $T > 1$.
		 The fact that $w = {\mathcal{L} u^{\frac{1}{2}}}/{u^{\frac{1}{2}}}$ is critical follows from Proposition~\ref{prop:crit} above. Moreover, \cite[Proposition~21 and Proposition~22]{HKPII}  states
		that $u^{1/2} \in \mathcal{D}_0$ and $\sum_X \mathcal{L}u \,m = \sum_{X} cu $ are necessary conditions for $w$ to be positive critical. Hence, the inequality	 $\sum_{X} \mathcal{L} u\,m \neq \sum_{X} cu$ implies null-criticality of $w$. This finishes the proof.
	\end{proof}

	\section{Fractional Laplacian} \label{sec:fractional}

	In this section we apply the optimality criteria from the previous section to the fractional Laplacian on graphs. For the self-adjoint operator $L$ associated with a graph $(b,c)$ over $(X,m)$, we   define $L^\sigma$,  $\sigma \in (0,1)$,  via the spectral theorem, and observe that it can be represented as 
	\[
	L^{\sigma}f = \frac{1}{|\Gamma(-\sigma)|} \int_0^{\infty} \big( I - e^{-tL} \big)\,f(x)\frac{dt}{t^{1+\sigma}} , 
	\]
	for $ f \in D(L^{\sigma}),$ and $x \in X$. It was shown in \cite{HKPII} that $L^{\sigma}$ is the restriction of some formal Laplacian $\mathcal{L}^{\sigma}$ on a formal domain $ \mathcal{F}^{\sigma} $ associated to a graph $(b_{\sigma},c_{\sigma})$  over $(X,m)$.  The edge weights $b_{\sigma}$ are given for $x\neq y$ by
\begin{align*} 
		b_{\sigma}(x,y)&=\frac{1}{|\Gamma(-\sigma)|}\int_{0}^{\infty}m(x)e^{-tL}1_y(x)\frac{d t}{t^{1+\sigma}}\\
		c_\sigma(x)&=\frac{1}{|\Gamma(-\sigma)|}\int_0^\infty m(x)\bigl(1- e^{-tL}1(x)\bigr)\,	\frac{dt}{t^{1+\sigma}}.
	\end{align*}
Clearly, it turns out that $c_\sigma=0$ whenever the original graph is stochastically complete. Moreover, as the semigroup $e^{-tL}$ is positivity improving due to connectedness, we obtain strict positivity of $ b_{\sigma}(x,y) $ for all $ x,y\in X $, $x\neq y$.
	
Theorem~\ref{thm:optGF} can now be applied directly  to the fractional Laplacian. We always obtain a critical Hardy weight via the Green's function whenever $(b_{\sigma},c_{\sigma})$ is transient and null-criticality is subject to the  criteria involving stochastic completeness (at infinity).

We now specialize ourselves to the case of stochastic completeness. To show  Theorem~\ref{thm:FrLa}, we need one further ingredient, which determines transience of the fractional graph in terms of a Nash inequality.

\begin{theorem}[Nash/Sobolev inequality and transience] \label{thm:Nash}
	Let $b$ be a graph over $(X,m)$. 
	\begin{enumerate}[(a)]
		\item If $b$ satisfies a Nash inequality of dimension $d > 0$, then $b_{\sigma}$ is transient if $\sigma < \min\{ d/2,1\}$.
		\item If $b$ satisfies a Sobolev inequality of dimension $d > 2$, then $b_{\sigma}$ is transient for all $\sigma \in (0,1]$.
	\end{enumerate}
	
\end{theorem}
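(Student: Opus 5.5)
Our strategy is to show that the discrete Riesz kernel $k_\sigma=k_{\sigma,o}$ is finite. For the graph $(b_\sigma,c_\sigma)$, finiteness of $k_\sigma$ is equivalent to transience, because by \cite[Theorem~25]{HKPII} and the standard characterization of transience by a finite Green's function \cite[Theorem~6.1]{KLW} the Green's function of $(b_\sigma,c_\sigma)$ is given by $k_\sigma$ whenever the latter is finite. Indeed, $k_\sigma(x)=\int_0^\infty e^{-t L_\sigma}$-type resolvent at zero can be identified via subordination: $L^\sigma$ generates the subordinated semigroup, and $(L^\sigma)^{-1}1_o=\frac{1}{\Gamma(\sigma)}\int_0^\infty t^{\sigma-1}e^{-tL}1_o\,dt$. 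If this integral is finite at every $x\in X$, then a positive superharmonic non-constant function (or a finite Green's function) exists for $b_\sigma$, and transience follows. So the proof reduces to bounding $e^{-tL}1_o(x)$ for large $t$; the small-$t$ part of the integral is harmless since $e^{-tL}1_o(x)\le 1/m(o)$ is bounded by Markov property and $t^{\sigma-1}$ is integrable at $0$.

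For (a), we use the classical equivalence (Carlen--Kusuoka--Stroock, valid for Dirichlet forms, in particular for regular graph Dirichlet forms) that a Nash inequality of dimension $d$ implies ultracontractivity $\|e^{-tL}\|_{1\to\infty}\le C t^{-d/2}$ for all $t>0$. Applied to $1_o$, with $\|1_o\|_1=m(o)$, this gives $e^{-tL}1_o(x)\le C m(o) t^{-d/2}$. Thus
\[
k_\sigma(x)\le \frac{1}{\Gamma(\sigma)}\Big(\int_0^1 \frac{t^{\sigma-1}}{m(o)}\,dt + C m(o)\int_1^\infty t^{\sigma-1-d/2}\,dt\Big)<\infty
\]
precisely when $\sigma<d/2$. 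This proves the first part; the restriction $\sigma<1$ is only there so that $b_\sigma$ is defined as a fractional graph. One should check that the Nash inequality on $C_c(X)$ extends to the form domain $\D_0\cap\ell^2$ of $L$, which is immediate by density and lower semicontinuity.

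For (b), a Sobolev inequality of dimension $d>2$ implies a Nash inequality of the same dimension $d$ via Hölder interpolation between $\ell^1$ and $\ell^{2d/(d-2)}$. Then (a) gives transience for all $\sigma<\min\{d/2,1\}=1$, i.e., for all $\sigma\in(0,1)$. For $\sigma=1$, transience of $b$ itself follows directly, since the Sobolev inequality with Hölder yields a nontrivial Hardy weight on finite sets (e.g. $w=\epsilon 1_o$); alternatively the same heat kernel bound with $d>2$ makes $\int^\infty t^{-d/2}\,dt$ converge.

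The main obstacle is the first step: rigorously identifying finiteness of the Riesz integral with transience of $(b_\sigma,c_\sigma)$ over $(X,m)$, including that the limit $\alpha\downarrow0$ of the resolvents of $L^\sigma$ applied to $1_o$ equals $k_\sigma$. Here we would invoke \cite[Theorem~25]{HKPII}, or prove it by monotone convergence in the spectral representation $(L^\sigma+\alpha)^{-1}1_o$.
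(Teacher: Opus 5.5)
Your proposal is correct and is essentially the paper's argument. Both reduce transience to finiteness of the Riesz integral $\frac{1}{\Gamma(\sigma)}\int_0^\infty e^{-tL}1_o\,t^{\sigma-1}\,dt$ via \cite[Theorem~25]{HKPII}, and both control the large-$t$ part by the heat kernel decay $Ct^{-d/2}$ from Carlen--Kusuoka--Stroock; in the Sobolev case the paper cites Davies directly, whereas you first derive a Nash inequality by H\"older, which is equivalent.

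One small slip: the small-$t$ bound should read $e^{-tL}1_o(x)\le 1$ by the sub-Markov property, not $1/m(o)$. This does not affect the argument.
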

\begin{proof}
	In order to show transience, we show that the Green kernel $G_{\sigma}$ is finite at some point $o\in X$. In \cite[Theorem~25]{HKPII} it is shown that 
	$$G_{\sigma}(o,o) = \frac{1}{\Gamma(\sigma)} \int_0^{\infty} e^{-tL}1_o(o) \frac{dt}{t^{1-\sigma}}.$$
	The integral always converges at $0$ for $\sigma \in (0,1)$, and so we only need to ensure convergence at $\infty$. Now, the Nash inequality of dimension $d$ implies that 
\begin{align*}
	\sup_{x \in X}\frac{1}{m(x)} e^{-tL}1_x(x) &\leq C t^{-\frac{d}{2}}.
\end{align*}
by \cite[Theorem~2.1]{CKS} and the Sobolev inequality implies the same estimate for $d>2$ by \cite[Theorem~2.4.2]{dav89}. Hence, the integral converges at $\infty$ if $\sigma < d/2$. This finishes the proof.
\end{proof}

We have now everything in place to prove Theorem~\ref{thm:FrLa} from the introduction.
	
	\begin{proof}[Proof of Theorem~\ref{thm:FrLa}]
	As already discussed above, stochastic completeness of $b$ implies $c_{\sigma} = 0$. Furthermore, by the theorem above, $b_{\sigma}$ is transient if $\sigma < d/2$. Since $k_{\sigma,o} = G_{\sigma}(\cdot,o)$ by \cite[Theorem~25]{HKPII}, the 		 assertion now follows from Theorem~\ref{thm:optGF}.
	\end{proof}

We now consider the standard lattice graph on $\Z^d$ with fractional Laplacian $\Delta^{\sigma}$, $\sigma \in (0,1]$, as defined in the introduction. We denote the graph associated with $\Delta^{\sigma}$ by $b_{\sigma}$. In this situation, the Riesz kernel $k_{\alpha}$ can be extended to all $\alpha \in (-1,d/2)$, and $b_\sigma(x,y) = k_{-\sigma}(y-x)$, for $x\neq y$ in $\Z^d$, cf.\@ \cite{HKP}. 
It is also well known that the graph $b_{\sigma}$ is transient for $\sigma < d/2$.	 Thus, Theorem~\ref{thm:FrLa} is applicable in this situation. Moreover, one has the  following asymptotics of the discrete Riesz kernel. 

\begin{proposition}[{\cite[Theorem~A.1]{HKP}}] \label{thm:RKasymp}
	On $\Z^d$, for $\alpha \in (-1,d/2)$, we have 
	\begin{align*}
		k_{\alpha}(x) = C_{d,\alpha}|x|^{-d + 2\alpha} + \mathcal{O}\big( |x|^{-d+2\alpha-2} \big),
	\end{align*}
	as $|x| \to \infty$ with constant 
	\[
	C_{d,\alpha} = \frac{4^{-\alpha}\Gamma(d/2 - \alpha)}{\pi^{d/2} |\Gamma(\alpha)|}.
	\]
\end{proposition}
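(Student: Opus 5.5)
The proof I would give works on the Fourier side. On $\Z^d$ the operator $\Delta$ is diagonalized by the Fourier series, with symbol
\[
\psi(\theta)=\sum_{j=1}^d 4\sin^2(\theta_j/2),\qquad \theta\in[-\pi,\pi]^d.
\]
Hence $e^{-t\Delta}1_0(x)=(2\pi)^{-d}\int e^{ix\cdot\theta}e^{-t\psi(\theta)}\,d\theta$. Inserting this into the definition of $k_\alpha$ and using Fubini (valid for $0<\alpha<d/2$, since $\psi(\theta)^{-\alpha}\sim|\theta|^{-2\alpha}$ is integrable near $0$) gives
\[
k_\alpha(x)=(2\pi)^{-d}\int_{[-\pi,\pi]^d} e^{ix\cdot\theta}\,\psi(\theta)^{-\alpha}\,d\theta .
\]
This representation also serves for $\alpha\in(-1,0]$, where it is the natural continuation matching $b_\sigma=k_{-\sigma}$ off the diagonal. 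I would first check this identification, including the sign convention behind $|\Gamma(\alpha)|$: for negative $\alpha$, the formula $\Delta^{\sigma}$ via $|\Gamma(-\sigma)|$ gives $-k_{-\sigma}(x)>0$ off the diagonal, consistent with the positive constant.

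Next I would expand the symbol near the origin. Since $4\sin^2(s/2)=s^2-s^4/12+O(s^6)$,
\[
\psi(\theta)^{-\alpha}=|\theta|^{-2\alpha}\Bigl(1+\tfrac{\alpha}{12}\tfrac{\sum_j\theta_j^4}{|\theta|^2}+R(\theta)\Bigr),
\]
where $R$ is smooth away from $0$ and $O(|\theta|^4)$ with the matching derivative bounds $|\partial^\beta R|\lesssim|\theta|^{4-|\beta|}$. I would take a smooth cutoff $\chi$ equal to $1$ near $0$ and supported in the interior of $[-\pi,\pi]^d$, and split $\psi^{-\alpha}=\chi\psi^{-\alpha}+(1-\chi)\psi^{-\alpha}$. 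The second piece is smooth and $2\pi$-periodic, so its Fourier coefficients decay faster than any power of $|x|$.

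The first piece I would split into three parts. The main part $\chi|\theta|^{-2\alpha}$ differs from the full-space Fourier transform of $|\theta|^{-2\alpha}$ by the transform of $(1-\chi)|\theta|^{-2\alpha}$, which is smooth with symbol-type decay and so contributes $O(|x|^{-N})$ away from the origin. The classical formula for the Fourier transform of $|\theta|^{-2\alpha}$ gives exactly
\[
(2\pi)^{-d}\widehat{|\theta|^{-2\alpha}}(x)=\frac{4^{-\alpha}\Gamma(d/2-\alpha)}{\pi^{d/2}\Gamma(\alpha)}|x|^{2\alpha-d}=C_{d,\alpha}|x|^{-d+2\alpha}.
\]
The second term, $\chi\,|\theta|^{-2\alpha}\cdot\frac{\sum\theta_j^4}{|\theta|^2}$, is homogeneous of degree $2-2\alpha>-d$ near zero and smooth on the sphere. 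Its (tempered) Fourier transform is therefore homogeneous of degree $2\alpha-d-2$, so it contributes $O(|x|^{-d+2\alpha-2})$.

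The remaining term $\chi|\theta|^{-2\alpha}R$ is the main obstacle. It is only finitely regular at $0$, and I must show its Fourier coefficients are $O(|x|^{-d+2\alpha-2})$; in fact $O(|x|^{-d+2\alpha-4+\varepsilon})$ should hold. My plan is a dyadic decomposition $\chi|\theta|^{-2\alpha}R=\sum_{k\ge0}\eta(2^k\theta)\,(\cdots)$. For each dyadic piece I would integrate by parts $N$ times when $2^{-k}|x|\ge1$, and use the trivial $L^1$ bound $2^{-k(d+4-2\alpha)}$ otherwise. Summing the geometric series over $k$ yields a bound $|x|^{-d-4+2\alpha}$, using $d+4-2\alpha>0$ (true since $\alpha<d/2$); this is more than enough. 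Assembling the pieces gives the stated asymptotics with error $O(|x|^{-d+2\alpha-2})$.
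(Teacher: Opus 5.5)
The paper gives no proof of this proposition. It is quoted from \cite[Theorem~A.1]{HKP} without argument, so there is nothing in the paper to compare your route with, and your proposal has to stand on its own. It does, with two fixes below.

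\emph{What is correct.} The expansion $\psi^{-\alpha}=|\theta|^{-2\alpha}\bigl(1+\frac{\alpha}{12}\sum_j\theta_j^4/|\theta|^2+R\bigr)$ with $|\partial^\beta R|\lesssim|\theta|^{4-|\beta|}$ is right. The smooth periodic part $(1-\chi)\psi^{-\alpha}$ has rapidly decaying coefficients. The classical transform of $|\theta|^{-2\alpha}$ gives the leading term, and off the origin this also holds for $\alpha\in(-1,0)$ by analytic continuation. Your dyadic estimate gives $O(|x|^{2\alpha-d-4})$ for $\chi|\theta|^{-2\alpha}R$, precisely because $d+4-2\alpha>0$.

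\emph{A small omission in the quartic term.} The function $\chi h$, with $h=|\theta|^{-2\alpha-2}\sum_j\theta_j^4$, is not itself homogeneous. You need the same remark you made for the main term: $(1-\chi)h$ is a symbol of order $2-2\alpha$, so its transform decays faster than any power away from $0$. Alternatively, run your dyadic argument with $2-2\alpha$ in place of $4-2\alpha$. This gives $O(|x|^{2\alpha-d-2})$ for $\chi h$ directly, since $d+2-2\alpha>0$, and avoids homogeneous distributions altogether.

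\emph{A sign slip for $\alpha\in(-1,0)$, which you should fix rather than just flag.} Write $\psi^{-\alpha}=|\Gamma(\alpha)|^{-1}\int_0^\infty(1-e^{-t\psi})\,t^{\alpha-1}\,dt$ and apply Fubini; this is justified because the double integral of $(1-e^{-t\psi})t^{\alpha-1}\ge 0$ is finite. For $x\neq 0$ this gives $(2\pi)^{-d}\int e^{ix\cdot\theta}\psi^{-\alpha}\,d\theta=-k_\alpha(x)<0$. Your sentence asserting positivity of these off-diagonal coefficients is therefore wrong. The representation valid on the whole range is
$k_\alpha(x)=\frac{\Gamma(\alpha)}{|\Gamma(\alpha)|}(2\pi)^{-d}\int e^{ix\cdot\theta}\psi(\theta)^{-\alpha}\,d\theta$ for $x\neq0$.
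Likewise, your displayed classical constant has $\Gamma(\alpha)$ rather than $|\Gamma(\alpha)|$, so it equals $\frac{\Gamma(\alpha)}{|\Gamma(\alpha)|}C_{d,\alpha}$, not $C_{d,\alpha}$. The two signs cancel, so your final formula with $|\Gamma(\alpha)|$ is correct; the case $\alpha=0$ is trivial since $C_{d,0}=0$.

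\emph{What your route gives.} With these corrections you have a self-contained proof. It treats the Green's function $k_\sigma$ and the jump kernel $b_\sigma=k_{-\sigma}$ in one stroke. It also identifies the $|x|^{-d+2\alpha-2}$ term explicitly as the transform of the anisotropic quartic correction, so it can be pushed to a full asymptotic expansion if needed.
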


In the following, we set $$\alpha_0 = \frac{d/2 + \sigma}{2}.$$ 

\begin{lemma} \label{lem:sqrGF}
	Let $\sigma \in (0,1)$. 
	As $|x| \to \infty$, one has 
	\[
	k_{\sigma}^{1/2}(x) = \sqrt{C_{d,\sigma}}|x|^{-d + 2\alpha_0} + \mathcal{O}\Big( |x|^{-d + 2\alpha_0 - 2} \Big).
	\]
\end{lemma}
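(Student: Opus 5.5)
Since $\alpha_0=(d/2+\sigma)/2$, we have $-d+2\alpha_0=(-d+2\sigma)/2$, which is exactly half the exponent $-d+2\sigma$ appearing in Proposition~\ref{thm:RKasymp} for $\alpha=\sigma$. The plan is to apply Proposition~\ref{thm:RKasymp} with $\alpha=\sigma\in(0,1)\subseteq(-1,d/2)$; here $\sigma<d/2$ holds automatically for $d\ge 2$, and for $d=1$ it is the standing assumption $\sigma<1/2$. This gives
\[
k_\sigma(x)=C_{d,\sigma}|x|^{-d+2\sigma}\bigl(1+r(x)\bigr),\qquad r(x)=\mathcal{O}(|x|^{-2}),
\]
where we factor out the leading term. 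This is legitimate because $C_{d,\sigma}>0$: indeed $\Gamma(d/2-\sigma)>0$ since $d/2-\sigma>0$.

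Next we take square roots. The function $k_\sigma$ is strictly positive, since the heat semigroup is positivity improving by connectedness. Hence
\[
k_\sigma^{1/2}(x)=\sqrt{C_{d,\sigma}}\,|x|^{(-d+2\sigma)/2}\bigl(1+r(x)\bigr)^{1/2}.
\]
For $|x|$ large we have $|r(x)|\le 1/2$. On that range the elementary estimate $|(1+s)^{1/2}-1|\le|s|$ for $|s|\le1/2$, which follows from the mean value theorem, yields $(1+r(x))^{1/2}=1+\mathcal{O}(|x|^{-2})$.

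Multiplying out gives
\[
k_\sigma^{1/2}(x)=\sqrt{C_{d,\sigma}}\,|x|^{-d+2\alpha_0}+\mathcal{O}\bigl(|x|^{-d+2\alpha_0-2}\bigr),
\]
as claimed. There is no real obstacle. The only points to check are that $\sigma$ lies in the admissible range of Proposition~\ref{thm:RKasymp} and that the leading constant is positive, so that factoring it out and expanding the square root is justified.
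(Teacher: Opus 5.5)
Your proposal is correct and follows essentially the same route as the paper: apply Proposition~\ref{thm:RKasymp} with $\alpha=\sigma$, factor out the leading term $C_{d,\sigma}|x|^{-d+2\sigma}$, expand $(1+\mathcal{O}(|x|^{-2}))^{1/2}=1+\mathcal{O}(|x|^{-2})$, and use $-d+2\alpha_0=(-d+2\sigma)/2$. Your mean-value bound, together with your checks that $\sigma$ is admissible and $C_{d,\sigma}>0$, simply makes rigorous what the paper does via the Taylor expansion of $(1+t)^{1/2}$.
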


\begin{proof}
	By Proposition~\ref{thm:RKasymp}, we have that $k_{\sigma}(x) = C_{d,\sigma}|x|^{-d+2\sigma} + \mathcal{O}(|x|^{-d+2\sigma-2})$ as $|x| \to \infty$. Taking the Taylor expansion of $(1+t)^{1/2} = 1 + t/2 - t^2/8 + \ldots$, we find 
	\begin{align*}
	k^{1/2}_{\sigma}(x) &= \sqrt{C_{d,\sigma}}|x|^{(-d+2\sigma)/2}\big( 1 + \mathcal{O}(|x|^{-2}) \big)^{1/2} \\ 
	&= \sqrt{C_{d,\sigma}}|x|^{(-d+2\sigma)/2}\big( 1 + \mathcal{O}(|x|^{-2}) \big)
	\end{align*}
	for $|x| \to \infty$. Since $-d+2\alpha_0 = (-d + 2\sigma)/2$, this shows the claim. 
\end{proof}

\begin{lemma}
	Let $\sigma \in (0,1)$.
	Set $f = g - h$, where $g=k_{\sigma}^{1/2}/\sqrt{C_{d,\sigma}}$ and $h=k_{\alpha_0}/C_{d,\alpha_0}$. We have 
	\[
	\sum_{y \in \Z^d} b_{\sigma}(x,y) \big( | f(x) | + | f(y) | \big)= \mathcal{O}(|x|^{-d+2\alpha_0 -q}), 
	\]
	as $|x| \to \infty$, 
	where $q=\min\{2,d/2+3\sigma\}$. 
	\end{lemma}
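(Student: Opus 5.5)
We split the sum into the diagonal contribution $|f(x)|\sum_y b_\sigma(x,y)$ and the off-diagonal contribution $\sum_y b_\sigma(x,y)|f(y)|$, and we use the explicit kernel $b_\sigma(x,y)=k_{-\sigma}(y-x)$ together with Proposition~\ref{thm:RKasymp} for $\alpha=-\sigma$. The two things we need are the bound $k_{-\sigma}(z)=\mathcal{O}(|z|^{-d-2\sigma})$ and the finiteness of the degree, $\sum_{z\neq 0}k_{-\sigma}(z)=:D<\infty$.

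\emph{Pointwise bound on $f$.} First note that $-d+2\alpha_0=(-d+2\sigma)/2$. Lemma~\ref{lem:sqrGF} therefore gives $g(x)=|x|^{-d+2\alpha_0}+\mathcal{O}(|x|^{-d+2\alpha_0-2})$. Proposition~\ref{thm:RKasymp} with $\alpha=\alpha_0\in(0,d/2)$ gives $h(x)=|x|^{-d+2\alpha_0}+\mathcal{O}(|x|^{-d+2\alpha_0-2})$. The leading terms cancel, so
\[
|f(x)|\le C(1+|x|)^{-d+2\alpha_0-2}\qquad\text{for all } x\in\Z^d .
\]
Here $f$ is bounded near the origin because both $k_\sigma$ and $k_{\alpha_0}$ are finite. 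The diagonal term is then $D|f(x)|=\mathcal{O}(|x|^{-d+2\alpha_0-2})$, which is within the claimed bound since $q\le 2$.

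\emph{Off-diagonal term.} This step is the main obstacle, because it is a discrete convolution of two power laws. We estimate
\[
\sum_{y\neq x}|x-y|^{-d-2\sigma}(1+|y|)^{-\beta},\qquad \beta:=d-2\alpha_0+2=\tfrac d2-\sigma+2,
\]
by splitting $\Z^d$ into three regions.
\begin{enumerate}
\item[(i)] On $|y|\ge |x|/2$ we bound $(1+|y|)^{-\beta}\lesssim |x|^{-\beta}$ and sum the kernel, which gives $\mathcal{O}(|x|^{-\beta})=\mathcal{O}(|x|^{-d+2\alpha_0-2})$.
\item[(ii)] On $|y|<|x|/2$ we have $|x-y|\gtrsim |x|$, so this part is at most $|x|^{-d-2\sigma}\sum_{|y|<|x|/2}(1+|y|)^{-\beta}$. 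The remaining sum has three cases:
  \begin{itemize}
  \item if $\beta>d$ it is bounded, and the part is $\mathcal{O}(|x|^{-d-2\sigma})$;
  \item if $\beta<d$ it is $\mathcal{O}(|x|^{d-\beta})$, and the part is $\mathcal{O}(|x|^{-\beta-2\sigma})$, which is better than (i);
  \item if $\beta=d$ it produces a logarithm; there $d/2+\sigma=2$, so $d/2+3\sigma>2=q$ and the log is absorbed.
  \end{itemize}
\end{enumerate}

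\emph{Matching exponents.} Finally we rewrite the exponent from the $\beta>d$ case, $-d-2\sigma$, as $-d+2\alpha_0-q'$ with $q'=2\sigma+2\alpha_0=d/2+3\sigma$. Taking the worst of the three regions gives the exponent $-d+2\alpha_0-\min\{2,\,d/2+3\sigma\}$, which is the claimed rate with $q=\min\{2,d/2+3\sigma\}$.

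The delicate points are making the split uniform in $x$ and handling the borderline case $\beta=d$. For the borderline case we rely on the strict inequality $d/2+3\sigma>2$ there, which leaves room to absorb the logarithm. Summing the diagonal and off-diagonal bounds yields the lemma.
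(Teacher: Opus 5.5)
Your proof is correct and follows essentially the same route as the paper. The paper also bounds the diagonal term via the finite total kernel mass and splits the off-diagonal sum into $|y|\ge|x|/2$ and $|y|<|x|/2$, using the $|x|^{-d-2\sigma}$ decay of $k_{-\sigma}$ on the inner region, with the same case analysis (including the logarithmic case $d/2+\sigma=2$, i.e.\ $d=3$, $\sigma=1/2$). The only differences are cosmetic: your $\beta$ is shifted by $2$ relative to the paper's, and you absorb the origin through $(1+|y|)^{-\beta}$ rather than treating $f(0)$ separately.
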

	
	\begin{proof}
		We abbreviate $\beta = d-2\alpha_0$. It follows from Proposition~\ref{thm:RKasymp} and Lemma~\ref{lem:sqrGF} that $f(x) = \mathcal{O}(|x|^{-\beta-2})$. In particular, there exists some $L >0$ such that $f(x) \leq L|x|^{-\beta-2}$ for all $x \neq 0$. Since the sum $\sum_y b_{\sigma}(x,y)=\sum_y k_{-\sigma}(y-x)$ is independent of $x$ on $\Z^d$, it is clear that $\sum_y b_{\sigma}(x, y) |f(x)| = \mathcal{O}(|x|^{-\beta-2})$. 
		Next, we are now going to estimate the  sum $\sum_{y} k_{-\sigma}(y-x)|f(y)|$. To this end, for $x \in \Z^d$, we will sum up separately over the two regions
		\[
		Y_x^1 = \big\{ y \in \Z^d\mid  |y| \geq (1/2)|x| \big\},\quad Y_x^2 = \big\{ y \in \Z^d\mid |y| \leq (1/2)|x| \big\}.
		\]
		For $y \in Y_x^1$, by summability of the weights $k_{-\sigma}$ and with  $|f(y)| \leq L|y|^{-\beta-2}$ for $y \neq 0$, we get for some $C > 0$ that
		\begin{align*}
			\sum_{y \in Y_x^1} k_{-\sigma}(y-x) |f(y)| &\leq C |x|^{-\beta-2} \sum_{y \in \Z^d} k_{-\sigma}(y-x) 
		= \mathcal{O}\big( |x|^{-\beta-2} \big)
		\end{align*}
		as $|x| \to \infty$. For $y \in Y_x^2$, we get with the asymptotic expansion on $k_{-\sigma}$, $|f(y)| \leq L |y|^{-\beta-2}$ for $y \neq 0$,
		 and estimating the sum by an integral 
		\begin{align*}
			\sum_{y \in Y_x^2}k_{-\sigma}(y-x)|f(y)|
			&= \, 	\sum_{y \in Y_x^2 \setminus \{0\}}k_{-\sigma}(y-x)|f(y)|\, + \, |f(0)|k_{-\sigma}(x) \\
			&\leq
			 \, |x|^{-d-2\sigma}\, \int_1^{|x|} r^{d-1} r^{-\beta-2}\,dr \, + \, \mathcal{O}\big( |x|^{-(d+2\sigma)} \big),
		\end{align*}
		as $|x| \to \infty$. Note that  $d-\beta-3 = -1$ is only possible if $d=3$ and $\sigma = 1/2$, and then the integral grows as $\log |x|$. We have $ \beta + q = 3 < 3+\sigma =  d+\sigma $ in this case. Otherwise, the integral is $\mathcal{O}(|x|^{d-2-\beta})$ or $\mathcal{O}(1)$.
		 In all cases,  the term in the above line is $\mathcal{O}(|x|^{-\beta-q})$ 
		 since 
		 $q \leq 2$ and $\beta + q \leq d + 2
		 \sigma$.  This finishes the proof.  
		\end{proof}
		
		We have everything together to prove Theorem~\ref{thm:Zd}.
		
		\begin{proof}[Proof of Theorem~\ref{thm:Zd}]
			For the case $d \geq 3$ and $\sigma=1$, we refer to \cite[Theorem~7.2]{KPP18}. So let $\sigma \in (0,1 \wedge d/2)$. 
			Let $g$, $h$ and $f$ be the functions of the previous lemma. As remarked above, we can apply Theorem~\ref{thm:FrLa}, and, thus, we obtain that $w_{\sigma} = \Delta^{\sigma} g / g$ is a null-critical Hardy weight. Furthermore, by \cite[Theorem~1.1]{HKP}, ${w}_{\alpha_0, \sigma} = \Delta^{\sigma}h/h = c_{d,\sigma}|x|^{-2\sigma} + \mathcal{O}(|x|^{-2-2\sigma})$. Now the previous lemma gives $|\Delta^{\sigma} f| = \mathcal{O}(|x|^{-d+2\alpha_0-q})$. Since $g$ and $h$ both have the asymptotic behavior $ |x|^{-d + 2\alpha_0} + \mathcal{O}(|x|^{-d+2\alpha_0-2})$ by Lemma~\ref{lem:sqrGF} and Proposition~\ref{thm:RKasymp}, we arrive at 
			\begin{align*}
				\big| w_{\sigma}(x) -  {w}_{\alpha_0, \sigma}(x) \big| &= \left| \frac{\Delta^{\sigma} f(x)}{g(x)} + \frac{\Delta^{\sigma} h(x)}{h(x)}\left(\frac{h(x)-g(x)}{g(x)}\right) \right| \\
				&\leq  \mathcal{O}(|x|^{-q}) + \mathcal{O}(|x|^{-2-2\sigma})
			\end{align*}
			as $|x| \to \infty$. This finishes the proof. 
		\end{proof}

\section{Extension to Schrödinger operators}\label{sec:Schr}
In this section we discuss how to extend the abstract main result, Theorem~\ref{thm:null_critical_weights_Hardy} to Schrödinger operators. Let $b$ be a connected graph over $(X,m)$ and let $q$ be a potential, i.e.,\@ a function $q : X \to \R$. We assume that the Schrödinger form $\mathcal{Q}+q$ defined on $C_c(X)$ satisfies
\begin{align*}
	(\mathcal{Q}+q)(\varphi) = \mathcal{Q}(\varphi) + \sum_X m q \varphi ^2 \ge0.
\end{align*}
This form is intimately related to the Schrödinger operator $$\mathcal{H}=\mathcal{L}+q$$ defined on the formal domain $\mathcal{F}$. A Hardy weight for $\mathcal{Q}+q$ is a function $w : X \to [0,\infty)$ such that $(\mathcal{Q}+q)(\varphi)\ge\sum_X mw\varphi^2$ for $\varphi\in C_c(X)$.  We say that $\mathcal{H}$ is \emph{subcritical} if it allows for a non-trivial Hardy weight. In this case there exists a strictly positive Green's function with an associated Green's operator $G$ as defined above. For a subcritical operator $\mathcal{H}$, we define the spaces $\mathcal{D}_0$ and $\mathcal{P}$ as above, cf.~\cite[Section~5.2]{KPP18}.

\medskip

We will show the following extension of Theorem~\ref{thm:main_optimality_theorem}.

\begin{theorem}\label{thm:main_optimality_theorem_Schr} Assume $\mathcal{H}$ is subcritical.
	Let $u \in \D_0\cup \P$, $v\in \mathcal{F}$, $u,v > 0$  such that $\mathcal{H}v=0$ and $0\lneq \mathcal{H}u \in \ell^1(X,vm)$.
	Then the function $w = {\mathcal{H} (uv)^{\frac{1}{2}}}/{(uv)^{\frac{1}{2}}}$ is a null-critical Hardy  weight for $\mathcal{Q}+q$. 
\end{theorem}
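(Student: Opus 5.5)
The plan is to reduce to Theorem~\ref{thm:main_optimality_theorem} with $c=0$, via the ground state transform with respect to the positive harmonic function $v$. Consider the graph $b_v(x,y)=b(x,y)v(x)v(y)$ over $(X,v^2m)$ with no killing term. Its form is exactly $\mathcal{Q}_v$, and its formal Laplacian is
\[\mathcal{L}_v f=\frac{1}{v}\mathcal{H}(vf).\]
The ground state representation (cf.\ \cite[Proposition~4.8]{KPP20}) gives, for $\varphi\in C_c(X)$,
\[(\mathcal{Q}+q)(v\varphi)=\mathcal{Q}_v(\varphi),\]
and the map $\varphi\mapsto v\varphi$ is a bijection of $C_c(X)$. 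Hence $w$ is a (critical, null-critical) Hardy type weight for $\mathcal{Q}+q$ if and only if it is one for $b_v$ over $(X,v^2m)$. The ground states correspond via multiplication by $v$, and $\ell^2(X,|w|m)$ integrability of $v\tilde v$ corresponds to $\ell^2(X,|w|v^2m)$ integrability of $\tilde v$.

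Key steps in order:
\begin{enumerate}
\item Subcriticality of $\mathcal{H}$ is equivalent to transience of $b_v$. The Green's function transforms as $G^{v}(x,y)=G(x,y)/(v(x)v(y))$ up to the measure normalization. Consequently $\mathcal{P}$ and $\mathcal{D}_0$ for $\mathcal{H}$ correspond to $\mathcal{P}_v$ and $\mathcal{D}_0(\mathcal{Q}_v)$ under $f\mapsto f/v$. In particular $u\in\mathcal{D}_0\cup\mathcal{P}$ gives $\tilde u:=u/v\in\mathcal{D}_0(\mathcal{Q}_v)\cup\mathcal{P}_v$.
\item Compute $\mathcal{L}_v\tilde u=\mathcal{H}u/v$. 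Then
\[\|\mathcal{L}_v\tilde u\|_{\ell^1(v^2m)}=\sum_X mv\,\mathcal{H}u<\infty\]
by the assumption $\mathcal{H}u\in\ell^1(X,vm)$. Also $\mathcal{L}_v\tilde u\gneq0$.
\item The supersolution weight transforms correctly:
\[\frac{\mathcal{L}_v\tilde u^{1/2}}{\tilde u^{1/2}}=\frac{\mathcal{H}(v\tilde u^{1/2})}{v\,\tilde u^{1/2}}=\frac{\mathcal{H}(uv)^{1/2}}{(uv)^{1/2}}=w.\]
\item Apply Theorem~\ref{thm:main_optimality_theorem} to $b_v$ with $c=0$. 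Null-criticality needs $\sum_X v^2m\,\mathcal{L}_v\tilde u\ne0$, which holds since $\mathcal{H}u\gneq0$ and $v>0$. Nonnegativity of $w$ follows because $\tilde u$ is superharmonic for $\mathcal{L}_v$, hence so is $\tilde u^{1/2}$ \cite[Corollary~2.3]{KPP18}. Thus $w$ is a genuine Hardy weight.
\item Transfer criticality and null-criticality back to $\mathcal{Q}+q$ via the correspondence above.
\end{enumerate}

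The main obstacle is step~1, namely rigorously identifying $\mathcal{D}_0$ and $\mathcal{P}$ for $\mathcal{H}$ with those of $b_v$. This requires also that $v\in\mathcal{F}$ makes $\mathcal{L}_v$ well defined on $\tilde u$, i.e.\ $\sum_y b(x,y)v(y)|\tilde u(y)|=\sum_y b(x,y)u(y)<\infty$, which holds since $u\in\mathcal{F}$. For $\mathcal{D}_0$ I would use that $\|\cdot\|_o$-closures correspond: $(\mathcal{Q}+q)(v\varphi)=\mathcal{Q}_v(\varphi)$ on $C_c(X)$ gives an isometry of the completions, together with pointwise convergence at $o$. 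For $\mathcal{P}$, I would use the Green's function relation. Since $\mathcal{H}(G_y^{\mathcal{H}})=1_y$, the function $G^{\mathcal{H}}(\cdot,y)/v$ solves $\mathcal{L}_v f=1_y/v(y)$ and lies in $\mathcal{D}_0(\mathcal{Q}_v)$. By uniqueness it equals the $b_v$-Green's function at $y$, scaled by $v(y)m(y)$-type factors relative to the measure $v^2m$. Hence $Gk=v\,G^{v}(k/v)$, and potentials correspond, as cited from \cite[Section~5.2]{KPP18}.
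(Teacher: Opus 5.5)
Your proposal is correct and follows the paper's proof. Both use the ground state transform to $b_v$ over $(X,v^2m)$ with no killing term, identify $\mathcal{D}_{0,v}=\frac1v\mathcal{D}_0$ and $\mathcal{P}_v=\frac1v\mathcal{P}$ via $G_vk=\frac1v G(vk)$, apply the abstract theorem to $u/v$, and transfer the result back.

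The one minor difference is the abstract theorem you invoke. You use Theorem~\ref{thm:main_optimality_theorem} directly; it accepts $u/v\in\mathcal{D}_{0,v}\cup\mathcal{P}_v$, and its null-criticality condition reduces to $\sum_X vm\,\mathcal{H}u\neq 0$. The paper instead applies Theorem~\ref{thm:null_critical_weights_Hardy}, so it must first upgrade $u/v$ to a potential, using that superharmonic elements of $\mathcal{D}_{0,v}$ are potentials \cite[Lemma~8]{HKPII}.
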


To prove the theorem, we employ the ground state transform with respect to $v$ which transforms $\mathcal{Q}+q$ into a form $\mathcal{Q}_v$ defined in Section~\ref{sec:null_critical_weights} above Lemma~\ref{lem:F_T^pm_u_in_D_0}. The ground state transform, \cite[Proposition~4.8]{KPP18} states that for all $\varphi \in C_c(X)$, we have
\begin{align*}
	\mathcal{Q}_v(\varphi) = (\mathcal{Q}+q)(v\varphi),
\end{align*}
whenever $v$ is a strictly positive solution of $\mathcal{H}v=0$. 
We denote the operator associated with $\mathcal{Q}_v$ with respect to the measure space $(X,v^2m)$ by $\mathcal{L}_v$ and observe that it acts on 
$\mathcal{F}_v=\frac{1}{v}\mathcal{F}$ as
\begin{align*}
	\mathcal{L}_v f(x) = \frac{1}{(v^2m)(x)} \sum_{y} b(x,y)v(x)v(y) (f(x)-f(y)).
\end{align*}
It is well known that $\mathcal{H}$ is subcritical if and only if $\mathcal{L}_v$ is subcritical, \cite[Corollary~2.5]{KPP20}. We denote the Green's operator for a subcritical $\mathcal{L}_v$ by $G_v$ acting on the corresponding space $\mathcal{G}_v$. Likewise,  we denote
extended space of $\mathcal{Q}_v$ by $\mathcal{D}_{0,v}:=\mathcal{D}_{0}(\mathcal{Q}_v)$ and the space of potentials by $\mathcal{P}_v$.

\begin{lemma} Assume $\mathcal{H}$ is subcritical and that there exists a strictly positive $v\in \mathcal{F}$ such that $\mathcal{H}v=0$. Then, 
	\begin{align*}
		\mathcal{D}_{0,v}=\frac{1}{v}\mathcal{D}_{0}\qquad \mbox{and}\qquad \mathcal{P}_v=\frac{1}{v}\mathcal{P}.
	\end{align*}
\end{lemma}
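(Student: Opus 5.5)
The plan is to work throughout with the ground state transform $\varphi\mapsto v\varphi$. By \cite[Proposition~4.8]{KPP18} it gives $\mathcal{Q}_v(\varphi)=(\mathcal{Q}+q)(v\varphi)$ for all $\varphi\in C_c(X)$. It also gives the operator identity $\mathcal{L}_v f=\frac{1}{v}\mathcal{H}(vf)$ for $f\in\mathcal{F}_v$, which follows directly from $\mathcal{H}v=0$ by expanding $\sum_y b(x,y)v(y)(f(x)-f(y))$. Both equalities in the lemma are then statements that multiplication by $v$ intertwines the two settings. I would treat the extended spaces first and the potentials second.

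\emph{Extended spaces.} Multiplication by $v$ is a bijection of $C_c(X)$ onto itself, since $v>0$. Let $f\in\mathcal{D}_{0,v}$. By Proposition~\ref{prop:charD_0}, applied to $\mathcal{Q}_v$, there are $\varphi_n\in C_c(X)$ with $\varphi_n\to f$ pointwise and $\sup_n\mathcal{Q}_v(\varphi_n)<\infty$. Then $v\varphi_n\in C_c(X)$, $v\varphi_n\to vf$ pointwise, and $\sup_n(\mathcal{Q}+q)(v\varphi_n)<\infty$, so the same characterization applied to $\mathcal{Q}+q$ gives $vf\in\mathcal{D}_0$. The reverse inclusion is proved symmetrically.

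The main obstacle is this step: Proposition~\ref{prop:charD_0} is stated for graphs $(b,c)$, whereas $\mathcal{Q}+q$ may have $q$ of either sign. I would first try to use the extended-space definition of \cite[Section~5.2]{KPP18} for subcritical $\mathcal{H}$ and check that it is equivalent to the bounded-energy-plus-pointwise-convergence characterization. Subcriticality supplies a Hardy weight $w_0>0$, and since $(\mathcal{Q}+q)(\varphi)\ge\sum_X m w_0\varphi^2$, energy bounds control point evaluations, which is exactly what that characterization needs. If this equivalence is awkward, I would instead take it as the definition of $\mathcal{D}_0$ for $\mathcal{H}$, transported from $\mathcal{Q}_v$. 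Note that $\mathcal{Q}_v$ is a genuine graph form without killing term over $(X,v^2m)$, so Proposition~\ref{prop:charD_0} does apply on that side.

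\emph{Potentials.} First I would establish the Green's function relation
\[
G_v(x,y)=\frac{G(x,y)}{v(x)v(y)}.
\]
Define $\tilde G_y:=G(\cdot,y)/(v\,v(y))$. Using $G_y\in\mathcal{D}_0$ and the first part, $\tilde G_y\in\mathcal{D}_{0,v}$. Using the intertwining identity,
\[
\mathcal{L}_v\tilde G_y=\frac{1}{v\,v(y)}\mathcal{H}G_y=\frac{1_y}{v(y)^2}\cdot\frac{v(y)^2 m(y)}{v^2m}\cdot\frac{1}{m(y)}\,\cdot\ldots
\]
and after simplifying the measure factors ($m$ against $v^2m$) this equals $1_y$ in the normalization for the measure $v^2m$. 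By uniqueness of $G_y$ in the extended space \cite[Theorem~6.1]{KLW}, $\tilde G_y$ is the Green's function of $\mathcal{L}_v$; I would double-check the exact power of $v(y)$ against the measure normalization.

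Next, for a function $k$, set $\tilde k:=k/v$. From the formula above, $G_v\tilde k=\frac{1}{v}G\bigl(k\cdot(\text{factor in }v)\bigr)$, so $k\in\mathcal{G}$ if and only if the correspondingly rescaled function lies in $\mathcal{G}_v$, and then $Gk=v\,G_v\tilde k$. Finally, membership in $\mathcal{F}$ transfers as $f\in\mathcal{F}_v\iff vf\in\mathcal{F}$, by the definition $\mathcal{F}_v=\frac{1}{v}\mathcal{F}$. Combining these gives $\mathcal{P}_v=\frac{1}{v}\mathcal{P}$.
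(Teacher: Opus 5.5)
Your proposal is correct in outline, but it is organized differently from the paper's proof.

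\textbf{Extended spaces.} The paper does not go through Proposition~\ref{prop:charD_0}. The map $\varphi\mapsto v\varphi$ is a bijection of $C_c(X)$ with $\mathcal{Q}_v(\varphi)=(\mathcal{Q}+q)(v\varphi)$. So a sequence is Cauchy for $\mathcal{Q}_v$ if and only if its image is Cauchy for $\mathcal{Q}+q$, and pointwise limits correspond. This gives $\mathcal{D}_{0,v}=\frac1v\mathcal{D}_0$ directly and avoids the obstacle you identified, namely the bounded-energy characterization for signed $q$. Your route also works, since that characterization only uses that $\mathcal{D}_0$ is a Hilbert space with continuous point evaluations (weak compactness plus Banach--Saks). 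However, your fallback of redefining $\mathcal{D}_0$ for $\mathcal{H}$ by transport from $\mathcal{Q}_v$ should be dropped. It would make the identity hold by definition rather than prove it for the space of \cite[Section~5.2]{KPP18}.

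\textbf{Potentials.} The paper identifies the Green's functions by exhaustion with finite sets, \cite[Theorem~5.16~(b)]{KPP18}. On a finite set, $\mathcal{L}_v=\frac1v\mathcal{H}v$ is just a conjugation of matrices, and one passes to the limit. You instead show that the rescaled $G_y$ lies in $\mathcal{D}_{0,v}$ and solves the equation, and then invoke uniqueness for the genuine graph $\mathcal{Q}_v$. Your route nicely recycles the first part. It does need $G_y\in\mathcal{D}_0$ for the Schr\"odinger operator $\mathcal{H}$, which requires its own justification, e.g.\ by the same exhaustion or by Riesz representation of $\varphi\mapsto\varphi(y)$ on $\mathcal{D}_0$. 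The paper's route needs only the approximation theorem.

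\textbf{Normalization.} You were right to flag this. With the paper's conventions ($\mathcal{H}G_y=1_y$ and $Gk=\sum_y G(\cdot,y)k(y)$, with no measure), your first equality gives $\mathcal{L}_v\tilde G_y=1_y/v(y)^2$. So the correct kernel is $G_v(x,y)=\frac{v(y)}{v(x)}G(x,y)$, i.e.\ $G_vk=\frac1vG(vk)$ and $\mathcal{G}_v=\frac1v\mathcal{G}$, exactly as in the paper. Your formula $G(x,y)/(v(x)v(y))$ is the one for the symmetric kernel normalized against the measure. Positive $y$-dependent factors only rescale $\mathcal{G}_v$, so the conclusion $\mathcal{P}_v=\frac1v\mathcal{P}$ is unaffected.
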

\begin{proof}
		By the ground state transform we  obtain that  $(\varphi_n)$ converges with respect to $\mathcal{Q}_v$ if and only if $(v\varphi_n)$ converges with respect to $\mathcal{Q}$. This gives the desired relation for the extended spaces. Furthermore, it can be seen by finite approximation of the Green's function, \cite[Theorem~5.16~(b)]{KPP18} that
		\begin{align*}
			G_vk=\frac{1}{v} G vk
		\end{align*}
		for $k \in \mathcal{G}_v=\frac{1}{v}\mathcal{G}$. This gives the desired relation for the space of potentials.
\end{proof}

\begin{proof}[Proof of Theorem~\ref{thm:main_optimality_theorem_Schr}]
	By the lemma above, we have $u/v \in \D_{0,v}\cup \P_v$.
	Since all superharmonic functions in $  \mathcal{D}_{0,v}$  are  necessarily  potentials, cf.\@ \cite[Lemma~8]{HKPII}, and  $\mathcal{L}_v(u/v) = \mathcal{H}u / v\ge0$, we obtain  $u/v \in  \P_v$. Furthermore,
	 $\mathcal{L}_v(u/v) = \mathcal{H}u / v \in v^{-1}\ell^1(X,vm)=\ell^1(X,v^2m)$. Hence, we can apply Theorem~\ref{thm:null_critical_weights_Hardy} to the operator $\mathcal{L}_v$ with vanishing killing term and the function $u/v$ to obtain that $$w = \frac{\mathcal{L}_v (u/v)^{\frac{1}{2}}}{(u/v)^{\frac{1}{2}}}=\frac{\mathcal{H} (uv)^{\frac{1}{2}}}{(uv)^{\frac{1}{2}}}$$ is a null-critical Hardy weight for $\mathcal{Q}_v$ over $(X,v^2m)$, i.e., $\mathcal{Q}_v(\varphi)\ge\sum_X v^2m w \varphi^2$ for $\varphi \in C_c(X)$. By the ground state transform, we have
	\begin{align*}
		(\mathcal{Q}+q)(v\varphi) = \mathcal{Q}_v(\varphi) \geq \sum_X m w (v\varphi)^2
	\end{align*}
	 for all $\varphi \in C_c(X)$.
	This shows that $w $ is a Hardy weight for $\mathcal{Q}+q$. 	Null-criticality of $w$ for $\mathcal{Q}+q$ is equivalent to null-criticality  for $\mathcal{Q}_v$ by \cite[Corollary~2.5]{KPP20}. This finishes the proof.
\end{proof}

\begin{remark}
	Instead of asking for a harmonic function $\mathcal{H}v=0$ in the theorem above, we could also ask for a positive superharmonic function $\mathcal{H}v \geq 0$. In this case, the same proof works if one additionally assumes that
\begin{align*}
	\sum_X m (\mathcal{H}u)v=\sum_X v^2m \mathcal{L}_v (u/v)\neq  \sum_X v^2m (\mathcal{L}_v1) (u/v)=\sum_X m u(\mathcal{H}v).
\end{align*}
\end{remark}
	\medskip

\textbf{Acknowledgement.} The authors acknowledge the financial support of the DFG and the hospitality of the IIAS Jerusalem.


\printbibliography

\end{document}